\setlist[enumerate]{leftmargin=.5in}
\setlist[itemize]{leftmargin=.5in}
\crefname{hypothesis}{Hypothesis}{Hypotheses}
\title{Analysis-Aware Defeaturing of Dirichlet Features\thanks{Submitted to the editors 19/08/2025.
\funding{This research was supported by the Swiss National Science Foundation via project MINT n. 200021\_215099, PDE tools for analysis-aware geometry processing in simulation science.}}
}
\author{
Philipp Weder\thanks{Institute of Mathematics, École Polytechnique Fédérale de Lausanne, Switzerland (\email{philipp.weder@epfl.ch}, \email{annalisa.buffa@epfl.ch})}
\and
Annalisa Buffa\footnotemark[2]
}
\newcommand*{\addFileDependency}[1]{
  \typeout{(#1)}
  \@addtofilelist{#1}
  \IfFileExists{#1}{}{\typeout{No file #1.}}
}
\begin{document}

\maketitle

\begin{abstract}
Feature removal from computational geometries, or defeaturing, is an integral part of industrial simulation pipelines. Defeaturing simplifies the otherwise costly or even impossible meshing process, speeds up the simulation, and lowers its memory footprint. Current defeaturing operators are often based on heuristic criteria and ignore the impact of the simplifications on the PDE solution. This work extends the mathematically rigorous framework developed in \cite{buffa_analysis-aware_2022} to features subject to Dirichlet boundary conditions in Poisson problems. We derive a posteriori error estimators for negative features in the interior or on the boundary of the computational domain. The estimators' dependence on the feature size is explicit, and their evaluation only involves boundary integrals over the feature boundary. Numerical experiments in two and three dimensions showcase the validity and efficiency of the estimators.
\end{abstract}

\begin{keywords}
  Geometric defeaturing, geometric refinement, \emph{a posteriori} error estimation
\end{keywords}

\begin{AMS}
65N50, 65N30
\end{AMS}

\section{Introduction}
In many engineering applications, partial differential equations (PDEs) must be solved in geometrically complex domains, including geometrical features of different sizes and shapes. The complexity and multiscale nature of such computational geometries make creating an appropriate mesh and subsequent simulations challenging. Even a single, relatively small geometric feature can increase the cost of a simulation by a factor of 10 due to the higher number of degrees of freedom \cite{white_meshing_2003, lee_small_2005}. Therefore, it is common practice to simplify the initial geometry as much as possible by ignoring those features that may not significantly impact the simulation's result. This process is referred to as defeaturing.

Multiple approaches exist for this task based on some a priori criteria, which are applied before any simulation is performed. The latter criteria may be subjective and rely on the engineer's expertise \cite{lee_development_2003}, be based on some geometrical rationale \cite{thakur_survey_2009} or on a priori knowledge of the underlying physical problem \cite{fine_automated_2000,foucault_mechanical_2004,rahimi_cad_2018}.

However, to automate the modeling and simulation process, a posteriori estimators of the defeaturing errors are necessary. In \cite{ferrandes_posteriori_2009}, an a posteriori error estimator is formulated for linear elasticity problems. Although not mathematically certified, it is based on the crucial insight that the defeaturing error must concentrate around the defeatured boundaries. Another line of work referred to as feature sensitivity analysis (FSA) \cite{gopalakrishnan_formal_2007, gopalakrishnan_feature_2008, turevsky_defeaturing_2008, turevsky_efficient_2009} based on topological sensitivity analysis \cite{choi_structural_2005, sokolowski_topological_1999} crucially relies on the assumption of infinitesimally small features and cannot be generalized to more complex situations. In \cite{li_estimating_2011}, an a posteriori estimator for internal holes based on the dual weighted residual (DWR) method \cite{becker_optimal_2001, oden_estimation_2002} is presented with generalizations to different linear and non-linear PDE problems in \cite{li_estimating_2011-1, li_goal-oriented_2013} and \cite{zhang_estimation_2016}. A similar a posteriori error estimator based on the reciprocal theorem, stating that the solution flux is conserved in the features, is presented in \cite{tang_evaluating_2013}. Yet, all the latter works lack a rigorous mathematical certification of the estimators as well as a study of their efficiency. More precisely, the estimators rely on the assumption that the solution on the defeatured geometry is close to the one on the exact geometry and involve heuristic parameters that depend on the size of the feature.

This work is based on the framework introduced in \cite{buffa_analysis-aware_2022}, which formalizes the process of defeaturing for the Poisson equation and provides a mathematically certified and efficient a posteriori estimator for single features subject to Neumann boundary conditions. The estimator is derived from an error representation formula based on the divergence theorem reflecting the intuition in \cite{ferrandes_posteriori_2009} that the defeaturing error is concentrated at the defeatured boundary. In particular, the evaluation of the estimator only requires evaluating boundary integrals over the defeatured boundary. Furthermore, the dependence on the feature size is explicit in contrast to previous works. In \cite{antolin_analysisaware_2024}, the framework is extended to multi-feature geometries, linear elasticity problems and Stokes flow, while \cite{buffa_adaptive_2024} combines the defeaturing estimator with the a posteriori estimator for the numerical error from \cite{buffa_posteriori_2023} to an adaptive meshing procedure, which simultaneously handles the numerical and the defeaturing error in the context of isogeometric analysis (IGA) \cite{hughes_isogeometric_2005, marussig_review_2018}. A comprehensive treatment of these subjects can also found in \cite{chanon_adaptive_2022}. An a posteriori error estimator based on an equilibrated flux reconstruction is proposed in \cite{buffa_equilibrated_2024}, which allows for a sharp bound on the numerical error component and is suitable for finite element discretizations.

We extend the framework in \cite{buffa_analysis-aware_2022} to the case of features with Dirichlet boundary conditions in Poisson problems. We derive a posteriori error estimators for negative features in the interior and the boundary of the computational domain and prove their reliability. Although the error representation formula remains valid in this case, the analysis is more intricate since it involves fractional Sobolev spaces on boundaries with negative exponents. Furthermore, care must be taken when treating mixed boundary conditions. As a model problem, we consider the Poisson equation, but the approach can clearly be generalized to other linear elliptic PDEs.

The paper is structured as follows: \Cref{sec:preliminaries} introduces the functional analytic notions necessary for the mathematical analysis of the defeaturing problem. The model problem is defined in \cref{sec:defeaturing model problems} together with their simplified counterparts. In \cref{sec:estimates}, we define the defeaturing error and the error estimators and prove their reliability. Numerical experiments validating the theory are presented and discussed in \cref{sec:numerical experiments}. Finally, we provide our conclusions in \cref{sec:conclusion and outlook}.

\section{Preliminaries}
\label{sec:preliminaries}
This section provides an overview of the necessary notions from functional analysis that will be used throughout the paper. For clarity, we recall the notation $\lesssim$: by writing $A \lesssim B$, we mean that $A \leq c B$ for some constant $c$ independent of the size of the considered domains. Additionally, we use $A \simeq B$ to indicate that both $A \lesssim B$ and $B \lesssim A$ hold.

\subsection{Domains and boundaries}
We consider $\R^n$ for $n = 2$ or $n = 3$ and start by defining the notion of bounded Lipschitz domain following the definition of \cite{leoni_first_2017}.

\begin{definition}
\label{def:uniformly lipschitz cts boundary}
The boundary $\partial \domain$ of an open bounded set $\domain \subseteq \R^n, n \geq 2$ is \emph{Lipschitz continuous} if there exist $\epsilon, L > 0, M \in \N$, and a locally finite countable open cover $\cover = \{C_k\}_{k \in \N}$ of $\partial\domain$ such that
\begin{enumerate}
    \item if $x \in \partial \domain$, then $\ball{\epsilon}{x} \subset C_k$ for some $C_k \in \cover$,
    \item  no point of $\R^n$ is contained in more than $M$ of the sets in $\cover$,
    \item for each $C_n \in \cover$ there exist local coordinates $y = (y', y_n) \in \R^{n - 1} \times \R$ and a Lipschitz continuous function $f_k: \R^{n- 1} \to \R$ with $\Lip f_k \leq L$, such that $C_k \cap \domain = C_k \cap V_k$, where $V_k$ is given in local coordinates by
    \begin{align*}
        \left\{(y', y_n) \in \R^{n-1}\times \R: y_n > f_k(y')\right\}.
    \end{align*}
\end{enumerate}
The constants $\epsilon$ and $L$ will be referred to as the \emph{covering radius} and the \emph{Lipschitz bound}, respectively.
\end{definition}

\begin{remark}
\label{rem:lipschitz bound and covering radius}
    We underline here the role of the constants $\epsilon$ and $L$ in \cref{def:uniformly lipschitz cts boundary}. They not only encode the regularity but also the relative distances between different parts of the boundary. Indeed, even for a smooth boundary, the covering radius $\epsilon$ might be tiny if the boundary almost self-intersects.
\end{remark}

We subsequently only consider connected Lipschitz domains if not otherwise stated. Let $\genman$ be a $d$-dimensional submanifold of $\R^n$ for $d \in \{n, n-1\}$. If $d = n$, then $|\genman|$ denotes the $n$-dimensional Lebesgue measure of $\genman$, while if $d = n -1$, $|\genman|$ denotes the $(n-1)$-dimensional Hausdorff measure of $\genman$. We denote by $\overline{\genman}$ and $\interior{\genman}$ the closure and interior of $\genman$, respectively. For a non-necessarily connected subset $\genman_*$ of $\genman$ we write $\hull{\genman_*}$ for the convex hull of $\genman_*$ in $\genman$. Furthermore, we denote by $\diam{\genman_*}$ the manifold diameter of $\hull{\genman_*}$, that is
    $\diam{\genman_*} \coloneqq \max\{\rho(x, y) | x, y \in \hull{\genman_*}\}$,
where $\rho(x, y)$ denotes the geodesic distance between two points $x$ and $y$. Finally, we will require the geometrical features, which will be defined in \cref{sec:defeaturing model problems}, to satisfy the following isotropy condition:

\begin{definition}
\label{def:isotropic subset}
    Let $\genman$ be a $d$-dimensional subset of $\R^n, d \in \{n-1, n\}$. We say that $\genman$ is \emph{isotropic} if
    \begin{align*}
        \diam{\genman} \lesssim \max_{\genman_c \in \conn{\genman}} \left(\diam{\genman_c}\right),
    \end{align*}
    and each connected component $\genman_c \in \conn{\genman}$ satisfies $\diam{\genman_c}^d \lesssim |\genman_c|$.
    In particular, if $\genman$ is isotropic and if we let $\genman_{\max} \coloneqq \operatorname*{argmax}_{\genman_c \in \conn{\genman}}\left ( \diam{\genman_c}\right)$, then
    \begin{align*}
        \diam{\genman}^d \lesssim \diam{\genman_{\max}}^d \lesssim |\genman_{\max}| \leq |\genman|.
    \end{align*}
    Moreover, if $\genman$ is connected, we have $\diam{\genman}^d \simeq |\genman|$.
\end{definition}

\subsection{Functional spaces}

Let us introduce the notation for the classical\linebreak Lebesgue and Sobolev spaces that are needed in the subsequent discussion following the reference works \cite{lions_non-homogeneous_1972, grisvard_elliptic_2011, leoni_first_2017} and \cite{leoni_first_2023}.

We denote by $\Lp{\omega}$ the standard Lebesgue spaces of exponent $p \in [1, \infty]$, endowed with the standard norm denoted $||\cdot||_{\Lp{\omega}}$. For a multi-index $\boldsymbol{\alpha} \in \N_0^d$, we denote by $D^{\boldsymbol{\alpha}}$ the partial derivative operator. We denote by $\hs{\omega}$ the standard Sobolev space of order $s \geq 0$, endowed with the standard norm $\norm{\cdot}{s}{\omega}$. Writing $|\boldsymbol{\alpha}| \coloneqq \sum_{i = 1}^d \alpha_i$, the latter is given by
\begin{align}
    \label{eq:definition of fractional sobolev norm}
    \norm{v}{s}{\omega}^2 \coloneqq \norm{v}{\lfloor s \rfloor}{\omega}^2 + \seminorm{v}{\theta}{\omega}^2, \quad \quad
    \norm{v}{\lfloor s \rfloor}{\omega}^2 \coloneqq \sum_{0 \leq |\boldsymbol{\alpha}| \leq \lfloor s \rfloor} \Ltwonorm{D^{\boldsymbol{\alpha}} v}{\omega}^2,
    \\
    \seminorm{v}{\theta}{\omega}^2 \coloneqq \sum_{|\boldsymbol{\alpha}| = \lfloor s \rfloor} \int_\omega \int_\omega \frac{|D^{\boldsymbol{\alpha}}v(x) -  D^{\boldsymbol{\alpha}}v(y)|^2}{|x - y|^{n + 2\theta}} \dd{s}(x) \dd{s}(x),
\end{align}
where $s = \lfloor s \rfloor + \theta \geq 0$ and $\dd{s}$ denotes the $(n-1)$-dimensional Hausdorff measure if $d = n-1$ and the Lebesgue measure if $d = n$.

Similarly, we denote by $H^{-s}(\omega)$ the dual space of $H_0^s(\omega) = \overline{\mathcal{C}_0^\infty(\omega)}$, which is identical to $\hs{\omega}$ for $s \in (0, \tfrac{1}{2}]$ \cite{lions_non-homogeneous_1972}, endowed with the dual norm $\norm{\cdot}{-s}{\omega}$ given by
\begin{align*}
    \norm{v}{-s}{\omega} \coloneqq \sup_{w \in H_0^s(\omega)\setminus\{0\}} \frac{\langle v, w \rangle}{\norm{w}{s}{\omega}}, & & \forall v \in H^{-s}(\omega),
\end{align*}
where $\langle \cdot, \cdot \rangle$ denotes the duality pairing.
Finally, we define the average of a function over $\genman$ by
\begin{align*}
    \overline{v}^{\genman} \coloneqq \frac{1}{|\genman|} \int_{\genman} v(x) \, \dd{s}(x), & & \forall v \in \Ltwo{\genman}.
\end{align*}

\subsection{Trace spaces and subspaces}
\label{subsec:trace spaces}
Trace spaces are essential to deal with boundary conditions.
To that end, let $\domain$ be a Lipschitz domain and $\genbd \subseteq \partial\domain$ with $|\genbd| > 0$ be a subset. We denote by $\htrace{\genbd}$ the Sobolev space on $\genbd$ equipped with the integral norm defined in \cref{eq:definition of fractional sobolev norm}.

In addition, $\traceop_\genbd: \hone{\domain} \to \htrace{\genbd}$ and $\liftop_\genbd :\htrace{\genbd} \to \hone{\domain}$ denote the unique trace and corresponding lifting operator \cite{leoni_first_2017}, respectively, such that\linebreak $\traceop_\genbd(\liftop_\genbd(\mu)) = \mu$ for all $\mu \in \htrace{\genbd}$.
The trace operator allows us to define the space
\begin{align*}
    \honebd{\mu}{\genbd}{\domain} \coloneqq \left \{v \in \hone{\domain} :\traceop_\genbd(v) = \mu\right\},
\end{align*}
for some $\mu \in \htrace{\genbd}$. In particular, it is common to define the \emph{natural norm} in $\htrace{\genbd}$ given by
\begin{align}
\label{eq:def natural norm}
    \htraceinfnorm{\mu}{\partial \domain} \coloneqq \inf_{v \in \honebd{\mu}{\partial \domain}{\domain}} \honenorm{v}{\Omega},
\end{align}
which can be shown to be equivalent to the intrinsic Sobolev norm defined in \cref{eq:definition of fractional sobolev norm} \cite{leoni_first_2017, hsiao_boundary_2021}. This norm equivalence will be important later and we have stated the result for reference in \cref{thm:norm equivalence} in \cref{subsec:norm equivalence}.

In the subsequent discussion, we will require the following subspace of the trace space $\htrace{\genbd}$:
\begin{align}
\label{eq:definition htracedbz}
    \htracedbz{\genbd} \coloneqq \left\{\mu \in \Ltwo{\genbd} : \mu^\star \in \htrace{\partial \domain}\right\},
\end{align}
where $\mu^\star$ denotes the extension of $\mu$ by 0 on $\partial \domain$. The corresponding norm and semi-norm are, respectively, defined by
\begin{align*}
    \htracedbznorm{\mu}{\genbd}^2 &\coloneqq \norm{\mu}{0}{\genbd}^2 + \htracedbzseminorm{\mu}{\genbd}^2,
    \\
    \htracedbzseminorm{\mu}{\genbd}^2 &\coloneqq \htraceseminorm{\mu}{\genbd}^2 + \int_\genbd \int_{\partial \domain \setminus \genbd} \frac{|\mu(y)|^2}{|x - y|^{n}} \dd{s}(x) \dd{s}(y).
\end{align*}
In particular, we have
\begin{align*}
    \htracedbznorm{\mu}{\genbd} = \htracenorm{\mu^\star}{\partial \domain} & &\text{ and } & & \htracedbzseminorm{\mu}{\genbd} = \htraceseminorm{\mu^\star}{\partial\domain}.
\end{align*}

\begin{remark}
    In the treatment of the space $\htracedbz{\Lambda}$, defined in \cref{eq:definition htracedbz}, we must distinguish two cases: If $\partial \genbd = \emptyset$, i.e. if $\Lambda$ is a closed curve or surface, then $\htracedbz{\genbd} = \htrace{\genbd}$ as sets, while their topologies \emph{do not} coincide. In contrast, if $\genbd$ has a boundary itself, i.e. $\partial \genbd \neq \emptyset$, then $\htracedbz{\genbd} \subset \htrace{\genbd}$, where the inclusion is strict in general.
\end{remark}

The space $\htracedbz{\genbd}$ is particularly suitable for analysis, admitting Poincaré- and interpolation-type inequalities, which are stated for reference in \cref{subsec:poincare and interpolation}. Furthermore, due to the norm equivalence between the natural norm in \cref{eq:def natural norm} and the intrinsic Sobolev norm, the former is also closely related to the norm in $\htracedbz{\genbd}$:

\begin{proposition}
\label{prop:norm equivalence for htracedbz}
    Let $\domain \subset \R^n, n \geq 2$ be a domain with Lipschitz boundary $\partial \domain$ and $\genbd \subset \partial \domain$ with $|\genbd| > 0$. Then, for every $\mu \in \htracedbz{\genbd}$,
    \begin{align*}
        \htraceinfnorm{\mu^\star}{\partial \domain} \lesssim |\genbd|^{\frac{-1}{2(n-1)}} \Ltwonorm{\mu}{\genbd} + \htracedbzseminorm{\mu}{\genbd}.
    \end{align*}
\end{proposition}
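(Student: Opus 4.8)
The plan is to reduce the estimate to a unit-size reference configuration and then transfer it back by a dilation argument; the factor $|\genbd|^{-1/(2(n-1))}$ will emerge precisely from the mismatch between the inhomogeneous scaling of the $\hone{\domain}$-norm and the homogeneous scaling of the two boundary norms on the right-hand side.

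First I would collect the two facts that are already available. On the one hand, $\liftop_{\partial\domain}(\mu^\star)$ is admissible in the infimum defining the natural norm in \eqref{eq:def natural norm}, so that $\htraceinfnorm{\mu^\star}{\partial\domain} \leq \honenorm{\liftop_{\partial\domain}(\mu^\star)}{\domain} \lesssim \htracenorm{\mu^\star}{\partial\domain}$; this uses the boundedness of the lifting operator (equivalently, \cref{thm:norm equivalence}), but the constant in this step a priori degrades as the covering radius shrinks (cf.\ \cref{rem:lipschitz bound and covering radius}). On the other hand, the identity $\htracenorm{\mu^\star}{\partial\domain} = \htracedbznorm{\mu}{\genbd} = \left(\Ltwonorm{\mu}{\genbd}^2 + \htracedbzseminorm{\mu}{\genbd}^2\right)^{1/2}$ recorded after \eqref{eq:definition htracedbz} already splits this upper bound into the two desired contributions. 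Hence the entire point is to make the scale dependence of the first step explicit.

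To that end, set $\lambda \coloneqq |\genbd|^{1/(n-1)}$ and pass to the rescaled data $\hat\domain \coloneqq \lambda^{-1}\domain$, $\hat\genbd \coloneqq \lambda^{-1}\genbd$, $\hat\mu(\hat x) \coloneqq \mu(\lambda \hat x)$, so that $|\hat\genbd| \simeq 1$ while $\hat\domain$ keeps the same Lipschitz bound as $\domain$; on $\hat\domain$ the previous paragraph gives $\htraceinfnorm{\hat\mu^\star}{\partial\hat\domain} \lesssim \Ltwonorm{\hat\mu}{\hat\genbd} + \htracedbzseminorm{\hat\mu}{\hat\genbd}$ with a constant that, by the $\lesssim$-convention, no longer depends on the geometry. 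It then remains to track each norm under the dilation $x = \lambda\hat x$: a lifting $\hat v$ of $\hat\mu^\star$ yields the lifting $v \coloneqq \hat v(\lambda^{-1}\,\cdot\,)$ of $\mu^\star$ with $\Ltwonorm{v}{\domain}^2 = \lambda^{n}\Ltwonorm{\hat v}{\hat\domain}^2$ and $\Ltwonorm{\nabla v}{\domain}^2 = \lambda^{n-2}\Ltwonorm{\nabla\hat v}{\hat\domain}^2$, so that, using $\lambda \lesssim 1$ (the feature is not larger than the reference scale), $\htraceinfnorm{\mu^\star}{\partial\domain} \lesssim \lambda^{(n-2)/2}\htraceinfnorm{\hat\mu^\star}{\partial\hat\domain}$; at the same time a change of variables in the integrals defining the two boundary norms gives $\Ltwonorm{\hat\mu}{\hat\genbd} = \lambda^{-(n-1)/2}\Ltwonorm{\mu}{\genbd}$ and $\htracedbzseminorm{\hat\mu}{\hat\genbd} = \lambda^{-(n-2)/2}\htracedbzseminorm{\mu}{\genbd}$. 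Chaining these relations, the powers of $\lambda$ in front of the seminorm cancel and the one in front of the $L^2$-term collapses to $\lambda^{-1/2} = |\genbd|^{-1/(2(n-1))}$, which is exactly the claimed inequality.

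The main obstacle is to be rigorous about the scale invariance in the reference step: one must make sure that the constant obtained on $\hat\domain$ depends only on the (fixed) Lipschitz bound $L$ and on $n$, so that the subsequent dilation cannot smuggle back a dependence on $\diam{\domain}$ or on the covering radius — in other words, that \cref{thm:norm equivalence} is invoked only on a domain of unit size. A minor point that still needs a line is the admissibility of the zero-extension $\mu^\star$ in $\htrace{\partial\domain}$ and the fact that it commutes with the dilation; both follow immediately from the definition of $\htracedbz{\genbd}$ in \eqref{eq:definition htracedbz} together with $\mu \equiv 0$ on $\partial\domain \setminus \genbd$.
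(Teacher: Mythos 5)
Your proposal is correct and follows essentially the same route as the paper: both are dilation arguments that reduce to a unit-size reference configuration where the lifting bound of \cref{thm:norm equivalence} holds with a uniform constant, and both extract the factor $|\genbd|^{-1/(2(n-1))}$ from the resulting scale mismatch (the paper phrases this by tracking how the covering radius $\epsilon \simeq |\genbd|^{1/(n-1)}$ enters the explicit constants of \cref{eq:general norm equivalence}, whereas you rescale the functions and transport the norms, which is equivalent). The caveats you flag at the end — uniformity of the reference constant and the implicit $|\genbd| \lesssim 1$ — are shared by the paper's own argument and do not constitute a gap relative to it.
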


\begin{proof}
    We argue by a scaling argument, where the rescaled subset $\hat{\genbd}$ of the boundary $\partial \hat{\domain}$ has unit size. Then, consider a covering of the rescaled domain boundary $\partial \hat{\domain}$ with constants $\hat{\epsilon}, \hat{L} > 0$ and $\hat{M} \in \N$ according to \cref{def:uniformly lipschitz cts boundary}, where we may assume without loss of generality that all sets in the covering are balls of radius $\hat{\epsilon}$.

    Rescaling all the balls by the factor $|\genbd|^{\frac{1}{n-1}}$ yields an admissible covering of $\partial \domain$ with $\epsilon = |\genbd|^{\frac{1}{n-1}} \hat{\epsilon}$. Furthermore, the number of overlaps stays constant under rescaling and, therefore, $M = \hat{M}$. Finally, the local Lipschitz constants only depend on the local coordinates. Hence, we have $L = \hat{L}$. Thus, we conclude that the constants in \cref{def:uniformly lipschitz cts boundary} depend on the size of a subset of the boundary as follows:
    \begin{align}
    \label{eq:norm equivalence proof - scaling relations}
        \epsilon \simeq |\genbd|^{\frac{1}{n-1}}, & & L \simeq 1, & & M \simeq 1.
    \end{align}
    Finally, let  $\mu \in \htracedbz{\genbd}$ with $\mu^\star$ the extension by zero to $\htrace{\partial \domain}$. By inserting the scaling relations \cref{eq:norm equivalence proof - scaling relations} into estimate \cref{eq:general norm equivalence} of \cref{thm:norm equivalence}, we find
    $u \in \hone{\domain}$, such that
    \begin{align*}
        \honeseminorm{u}{\domain} \lesssim |\genbd|^{\frac{-1}{2(n-1)}} \Ltwonorm{\mu}{\genbd} + \htraceseminorm{\mu^\star}{\partial \domain},
    \end{align*}
    which concludes the proof.
\end{proof}

\subsection{The Neumann 
trace operator}

We define the Neumann trace operator as the continuous linear operator $\neumop: \hdiv\domain \to \htracedual{\partial \domain}$, such that for $v \in \hdiv\domain$ and $\mu \in \htrace{\domain}$,
\begin{align}
\label{eq:defnition of neumann operator}
    \langle \neumop(v), \mu \rangle \coloneqq \int_\domain v \cdot \nabla \liftop_{\partial \domain}(\mu) \dd{x} + \int_\domain \Div(v) \liftop_{\partial \domain}(\mu) \dd{x},
\end{align}
where $\liftop_{\partial \domain}$ denotes the lifting operator defined in \cref{subsec:trace spaces}.
For subsets of the boundary $\genbd \subset \partial \domain$ with $|\genbd| > 0$, we can naturally extend the definition in \cref{eq:defnition of neumann operator}  to $\partialneumop \genbd: \hdiv\domain \to \htracedbzdual{\genbd}$ by setting
\begin{align}
\label{eq:definition of partial neumann operator}
    \langle \partialneumop\genbd(v), \mu\rangle \coloneqq \langle \neumop(v), \mu^\star \rangle, & & v \in \hdiv\domain,\, \mu \in \htracedbz{\genbd},
\end{align}

In particular, we are interested in the dependence of the operator norm of $\partialneumop\genbd$ on the size of $\genbd$. To that end, let us recall the well-known result for the Neumann trace operator with respect to the natural norm\cite{girault_finite_1986}:
\begin{align}
\label{eq:neumann operator norm}
    \sup_{v \in \hdiv\domain \setminus \{0\}} \sup_{\mu \in \htrace{\partial \domain} \setminus \{0\}} \frac{\langle \neumop(v), \mu \rangle}{\hdivnorm{v}{\domain} \htraceinfnorm{\mu}
    {\partial \domain}} = 1.
\end{align}
The latter does not hold in general for the partial Neumann trace operator $\partialneumop\genbd$.
Nevertheless, if $\genbd$ is not a closed surface, we can establish a result similar to \cref{eq:neumann operator norm} with the help of the norm equivalence in \cref{prop:norm equivalence for htracedbz}:

\begin{lemma}
\label{cor:size independence of neumann trace operator norm}
    Let $\domain \subset \R^n, n \geq 2$ be a Lipschitz domain and $\genbd \subset \partial \domain$ with $|\genbd| > 0$ and $\partial\genbd \neq \emptyset$. Then, it holds for any $v \in \hdiv\domain$ and $\mu \in \htracedbz{\genbd}$ that
    \begin{align*}
        \langle \partialneumop\genbd (v), \mu \rangle \lesssim \hdivnorm{v}{\domain} \htracedbznorm{\mu}{\genbd}.
    \end{align*}
\end{lemma}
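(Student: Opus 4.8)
The plan is to obtain the bound by concatenating three facts that are already available: the definition \cref{eq:definition of partial neumann operator} of $\partialneumop\genbd$, the classical operator-norm identity \cref{eq:neumann operator norm} for the full Neumann trace with respect to the natural norm, and the norm equivalence of \cref{prop:norm equivalence for htracedbz}. The one genuinely new ingredient needed is a Poincaré-type inequality on $\htracedbz{\genbd}$ to absorb the unfavourable power of $|\genbd|$ produced by \cref{prop:norm equivalence for htracedbz}.

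First I would fix $v \in \hdiv\domain$ and $\mu \in \htracedbz{\genbd}$ and unfold \cref{eq:definition of partial neumann operator}, so that $\langle \partialneumop\genbd(v), \mu\rangle = \langle \neumop(v), \mu^\star\rangle$, where $\mu^\star \in \htrace{\partial\domain}$ is the zero extension of $\mu$ (well defined precisely because $\mu \in \htracedbz{\genbd}$). Since $\mu^\star$ is a legitimate Dirichlet datum on all of $\partial\domain$, \cref{eq:neumann operator norm} gives $\langle \neumop(v), \mu^\star\rangle \le \hdivnorm{v}{\domain}\,\htraceinfnorm{\mu^\star}{\partial\domain}$, and then \cref{prop:norm equivalence for htracedbz} turns this into
\begin{align*}
  \langle \partialneumop\genbd(v), \mu\rangle \lesssim \hdivnorm{v}{\domain}\left( |\genbd|^{\frac{-1}{2(n-1)}}\Ltwonorm{\mu}{\genbd} + \htracedbzseminorm{\mu}{\genbd}\right).
\end{align*}

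The remaining — and only delicate — step is to control the first summand in the bracket by $\htracedbzseminorm{\mu}{\genbd}$ (hence by $\htracedbznorm{\mu}{\genbd}$) with a constant that does not see the size of $\genbd$. This is where the hypothesis $\partial\genbd \neq \emptyset$ is essential: because $\genbd$ has nonempty relative boundary, $\htracedbz{\genbd}$ is a proper subspace of $\htrace{\genbd}$ whose elements ``vanish at $\partial\genbd$'', so the Poincaré-type inequality recorded in \cref{subsec:poincare and interpolation} applies and yields $\Ltwonorm{\mu}{\genbd} \lesssim |\genbd|^{\frac{1}{2(n-1)}}\htracedbzseminorm{\mu}{\genbd}$ with a constant depending only on the Lipschitz character of $\partial\domain$. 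I would double-check the exponent $\frac{1}{2(n-1)}$ by the same dilation argument used in the proof of \cref{prop:norm equivalence for htracedbz}: under the rescaling that makes $\genbd$ have unit $(n-1)$-measure one has $\epsilon \simeq |\genbd|^{1/(n-1)}$, and this is exactly the power that makes both sides of the Poincaré inequality transform consistently, so that it cancels the factor $|\genbd|^{-1/(2(n-1))}$ above. Substituting gives $|\genbd|^{-1/(2(n-1))}\Ltwonorm{\mu}{\genbd} \lesssim \htracedbzseminorm{\mu}{\genbd} \le \htracedbznorm{\mu}{\genbd}$, and combining with $\htracedbzseminorm{\mu}{\genbd} \le \htracedbznorm{\mu}{\genbd}$ concludes the argument.

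I expect the main obstacle to be not the chain of inequalities itself but pinning down the correct scaling power in the Poincaré inequality and confirming that it precisely neutralises the $|\genbd|^{-1/(2(n-1))}$ coming from \cref{prop:norm equivalence for htracedbz}; conceptually this matches the remark that \cref{eq:neumann operator norm} has no size-independent analogue for $\partialneumop\genbd$ in general, since for a closed $\genbd$ no such Poincaré inequality is available.
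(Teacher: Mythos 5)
Your proposal is correct and follows essentially the same route as the paper's proof: unfold the definition of $\partialneumop\genbd$, apply \cref{eq:neumann operator norm} and \cref{prop:norm equivalence for htracedbz}, then absorb the factor $|\genbd|^{-1/(2(n-1))}$ via the Poincaré inequality of \cref{lemma:poincare I} (which is exactly the inequality $\Ltwonorm{\mu}{\genbd} \lesssim |\genbd|^{1/(2(n-1))}\htraceseminorm{\mu^\star}{\partial\domain}$ you identified as the key new ingredient). Your observation that $\partial\genbd\neq\emptyset$ is what makes this Poincaré step available matches the paper's own \cref{rem:size independence for closed surfaces}.
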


\begin{proof}
    Using the definition of the partial Neumann trace operator in \cref{eq:definition of partial neumann operator}, \cref{eq:neumann operator norm} and \cref{prop:norm equivalence for htracedbz}, we find
    \begin{align*}
        \langle \partialneumop\genbd(v), \mu\rangle &\leq \hdivnorm{v}{\domain} \htraceinfnorm{\mu^\star}{\partial \domain} 
        \lesssim \hdivnorm{v}{\domain} \left( |\genbd|^{\frac{-1}{2(n-1)}} \Ltwonorm{\mu}{\genbd} + \htracedbzseminorm{\mu}{\genbd} \right).
    \end{align*}
     Applying the Poincaré inequality from \cref{lemma:poincare I} to $\Ltwonorm{\mu}{\genbd}$, we conclude that
    \begin{align*}
        \langle \partialneumop\genbd(v), \mu\rangle &\leq \hdivnorm{v}{\domain} \htraceinfnorm{\mu^\star}{\partial \domain} 
        \lesssim \hdivnorm{v}{\domain} \htracedbznorm{\mu}{\genbd}.
    \end{align*}
\end{proof}

\begin{remark}
\label{rem:size independence for closed surfaces}
    Note that the argument in the proof of \cref{cor:size independence of neumann trace operator norm} fails for closed surfaces $\genbd$ since in this case the space $\htracedbz{\genbd}$ contains constants and, therefore, the Poincaré inequality does not apply in this form. However, for functions with zero average, the same argument applies with \cref{lemma:poincare II}.
\end{remark}

\section{Defeaturing model problems}
\label{sec:defeaturing model problems}
In this section, we formulate the defeaturing problem for negative features equipped with Dirichlet boundary conditions following the framework introduced in \cite{buffa_analysis-aware_2022}. For ease of exposition, we restrict ourselves to geometries with a single feature. The subsequent results generalize readily to multiple negative features as seen in \cite{buffa_analysis-aware_2022, antolin_analysisaware_2024, buffa_adaptive_2024, buffa_equilibrated_2024}.

\begin{definition}
\label{def:negative feature domain}
    Let $\domain \subset \R^n, n \in \{2, 3\}$ be a Lipschitz domain. We say that $\domain$ has a \emph{negative feature} if it is of the form $\domain = \defdomain \setminus \overline{\feat}$
    for a Lipschitz domain $\defdomain$ and an isotropic Lipschitz domain $\feat$ according to \cref{def:isotropic subset} such that $\defdomain \cap \feat \neq \emptyset$.
    We say that the feature $\feat$ is \emph{internal} if $\overline{F} \subset \defdomain$. Otherwise, we say that it is a \emph{boundary feature}.

    We define the \emph{defeatured boundary} by $\defbd \coloneqq \partial \feat \cap \partial \domain$. The \emph{simplified feature boundary} is defined as $\simpbd \coloneqq \partial \feat \setminus \overline{\defbd}$ for boundary features. The \emph{remaining boundary} is defined as $\rmbd \coloneqq \partial \domain \setminus \overline{\defbd}$. We split the remaining boundary into the remaining Dirichlet and Neumann boundary, $\rmbd_D$ and $\rmbd_N$, respectively, such that $\rmbd_D \cap \rmbd_N = \emptyset$ and $\rmbd_D \cup \rmbd_N = \rmbd$.
\end{definition}

\begin{figure}
    \centering
    \begin{tikzpicture}
    \draw[thin, gray, dashed] (-4,0) rectangle (4, 4);
    \node[black] at (0, 2) {$\domain$};

    \draw[very thick, gray] (2,2.5) circle [radius=0.75];
    \node[black] at (2,2.5) {$\feat^{(\mathrm{I})}$};
    \node[black] at (3.0, 2) {$\defbd^{(\mathrm{I})}$};

    \draw[very thick, gray] (-1,0) arc[start angle=180, end angle=0, radius=0.75];
    \node[black] at (-0.25, 0.3) {$\feat^{(\mathrm{DN})}$};
    \node[black] at (0.8, 0.75) {$\defbd^{(\mathrm{DN})}$};
    \node[black] at (-0.25, -0.5) {$\simpbd^{(\mathrm{DN})}$};

    \draw[very thick, green1] (-4, 1) -- (-4, 0) -- (-2, 0);
    \node[green1] at (-3, -0.5) {$\Tilde{\rmbd}_D$};

    \draw[very thick, gray] (-4, 3) -- (-4, 4) -- (-3.5, 4);
    \draw[very thick, gray] (-2, 4) -- (4, 4) -- (4, 0) -- (2, 0);
    \node[anchor=east, black] at (5.5, 2) {$\rmbd_D^{(\mathrm{DD})}$};
    
    \draw[thick, blue1] (-2,-0.1) -- (-2,0.1);
    \draw[thick, blue1] (2,-0.1) -- (2,0.1);
    \draw[thick, blue1] (-1,-0.1) -- (-1,0.1);
    \draw[thick, blue1] (0.5,-0.1) -- (0.5,0.1);
    \draw[very thick, blue1] (-2, 0) -- (-1, 0);
    \draw[very thick, blue1] (0.5, 0) -- (2, 0);
    \node[blue1] at (1.5, -0.5) {$\rmbd_N^{(\mathrm{DN})}$};

    \draw[thick, red1] (-4.1,1) -- (-3.9,1);
    \draw[thick, red1] (-4.1,3) -- (-3.9,3);
    \draw[very thick, red1] (-4, 1) -- (-4, 3);
    \node[red1, anchor=west] at (-5, 2) {$\Tilde{\rmbd}_N$};

    \draw[very thick, gray] (-2,4) arc[start angle=0, end angle=-180, radius=0.75];
    \node[black] at (-2.75, 3.70) {$\feat^{(\mathrm{DD})}$};
    \node[black] at (-1.75, 3.2) {$\defbd^{(\mathrm{DD})}$};
    \node[black] at (-2.375, 4.5) {$\simpbd^{(\mathrm{DD})}$};
\end{tikzpicture}
    \caption{Example of a two-dimensional domain with an internal negative feature $\feat^{(\mathrm{I})}$, a Dirichlet-Dirichlet boundary feature $\feat^{(\mathrm{DD})}$, and a Dirichlet-Neumann boundary feature $\feat^{(\mathrm{DN})}$. The remaining boundary $\rmbd$ is given by $\rmbd = \rmbd_D \cup \rmbd_N$, where the remaining Dirichlet boundary $\rmbd_D$ consists of two connected components, namely $\rmbd_D^{(\mathrm{DD})}$, connected to the corresponding boundary feature, and $\Tilde{\rmbd}_D$, not touching any feature boundary. The remaining Neumann boundary $\rmbd_N$ has the analogous structure $\rmbd_N = \rmbd_N^{(\mathrm{DN})} \cup \Tilde{\rmbd}_N$. The dashed lines represent the simplified boundaries $\simpbd^{(\mathrm{DD})}$ and $\simpbd^{(\mathrm{DD})}$ for the Dirichlet-Dirichlet and Dirichlet-Neumann features, respectively.}
    \label{fig:features}
\end{figure}
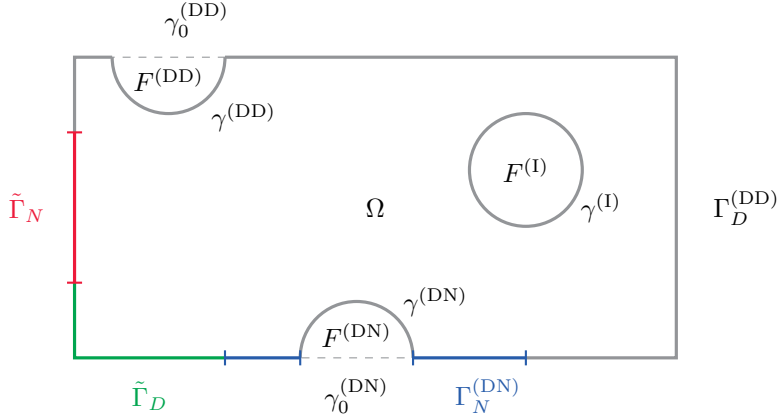

We now formulate the generic Poisson problem we want to solve in a domain $\domain$ with a negative feature $F$ and its corresponding defeatured counterpart $\defdomain$. We start with the problem in the exact geometry $\domain$ containing the feature:

\begin{problem}[Exact problem]
\label{problem:exact problem}
    Let $\domain = \defdomain \setminus\overline{\feat}$ be a Lipschitz domain with a negative feature $\feat$ and feature boundary $\defbd$ according to \cref{def:negative feature domain}. Then, solve
    \begin{nalign}
    \label{eq:exact poisson problem}
    \text{Find } u: \domain \to \R, \text{ solution of}\\
    \begin{cases}
        -\Delta u = f,& \text{ in } \domain,\\
        u = g_\defbd,& \text{ on } \defbd,\\
        u = g_D,& \text{ on } \rmbd_D,\\
        \partial_n u = g_N,& \text{ on } \rmbd_N,
    \end{cases}
\end{nalign}
for $f \in \Ltwo{\domain}, g_\defbd \in \htrace{\defbd}, g_D \in \htrace{\rmbd_D}$, and $g_N \in \Ltwo{\rmbd_N}$, such that $g_\defbd$ and $g_D$ are compatible whenever $\overline{\defbd} \cap \overline{\rmbd_D} \neq \emptyset$.
\end{problem}

The formulation of the defeatured problem depends on the geometry of $\domain$ and the intersections $\overline{\defbd} \cap \overline{\rmbd}_D$ and $\overline{\defbd} \cap \overline{\rmbd}_N$, which are illustrated in \cref{fig:features}:

\begin{problem}[Defeatured problem]
\label{problem:defeatured problem}
\problemequations
Let $\domain = \defdomain \setminus\overline{\feat}$ be a Lipschitz domain with a negative feature $\feat$, feature boundary $\defbd$, remaining Dirichlet boundary $\rmbd_D$, and remaining Neumann boundary $\rmbd_N$ according to \cref{def:negative feature domain}. We define the following defeatured problems, depending on the geometry of $\domain$:
\begin{enumerate}
    \item Dirichlet-Dirichlet case: Consider a boundary feature $F$ such that $\overline{\rmbd}_D \cap \overline{\gamma} \neq \emptyset$ and $\overline{\rmbd}_N \cap \overline{\gamma} = \emptyset$. That is, the feature only touches the remaining Dirichlet boundary. Then, the defeatured problem reads
    \begin{nalign}
    \label{eq:ddd defeatured problem}
    \text{Find } u_0: \defdomain \to \R, \text{ solution of}\\
    \begin{cases}
        -\Delta u_0 = \Tilde{f},& \text{ in } \defdomain,\\
        u_0 = \Tilde{g}_D,& \text{ on } \rmbd_D \cup \simpbd,\\
        \partial_n u_0 = g_N,& \text{ on } \rmbd_N,
    \end{cases}
    \end{nalign}
    where $\Tilde{f} \in \Ltwo{\defdomain}$ is an extension of $f$, and $\Tilde{g}_D \in \htrace{\rmbd_D \cup \simpbd}$ is an extension of $g_D$.

    \item Dirichlet-Neumann case: Consider a boundary feature $F$ such that $\overline{\rmbd}_N \cap \overline{\gamma} \neq \emptyset$ and $\overline{\rmbd}_D \cap \overline{\gamma} = \emptyset$. That is, the feature only touches the remaining Neumann boundary. Then, the defeatured problem reads
    \begin{nalign}
    \label{eq:ndn defeatured problem}
    \text{Find } u_0: \defdomain \to \R, \text{ solution of}\\
    \begin{cases}
        -\Delta u_0 = \Tilde{f},& \text{ in } \defdomain,\\
        u_0 = g_D,& \text{ on } \rmbd_D,\\
        \partial_n u_0 = \Tilde{g}_N,& \text{ on } \rmbd_N \cup \simpbd,
    \end{cases}
    \end{nalign}
    where $\Tilde{f} \in \Ltwo{\defdomain}$ is an extension of $f$, and $\Tilde{g}_N \in \Ltwo{\rmbd_D \cup \simpbd}$ is an extension of $g_N$.

    \item Internal case: $\feat$ is an internal feature. Then, the defeatured problem reads
        \begin{nalign}
    \label{eq:internal defeatured problem}
    \text{Find } u_0: \defdomain \to \R, \text{ solution of}\\
    \begin{cases}
        -\Delta u_0 = \Tilde{f},& \text{ in } \defdomain,\\
        u_0 = g_D,& \text{ on } \rmbd_D,\\
        \partial_n u_0 = g_N,& \text{ on } \rmbd_N,
    \end{cases}
    \end{nalign}
    where $\Tilde{f} \in \Ltwo{\defdomain}$ is an extension of $f$.
\end{enumerate}
\end{problem}
Several remarks are in order. First, we stress that we do not impose a specific extension of the source term $f$. However, it is clear that this choice significantly influences the defeaturing error. We will return to this in \cref{sec:numerical experiments}. 

Second, we justify our choice of imposing Neumann boundary conditions on the simplified feature boundary of Dirichlet-Neumann features by the observation that when discretizing the defeatured problem, imposing Dirichlet boundary conditions on the simplified boundary would require us to resolve the intersection between the outer Neumann boundary $\rmbd_N$ and the simplified feature boundary $\defbd_0$. The latter may also be hard to mesh, hence we want to avoid it.

Finally, we note that the assumption that Dirichlet-Neumann boundary features only touch the outer Neumann boundary can be relaxed. The following arguments demonstrate that they may also be connected to the outer Dirichlet boundary; however, we maintain this assumption for ease of exposition.

 \section{Defeaturing error estimates}
\label{sec:estimates}
In this section, we discuss the error caused by solving one of the equations \cref{eq:ddd defeatured problem,eq:ndn defeatured problem,eq:internal defeatured problem} of \cref{problem:defeatured problem} instead of \cref{problem:exact problem}. We start by defining the defeaturing error and presenting an error representation formula on the feature boundary before we derive reliable error indicators for each of the three cases in \cref{problem:defeatured problem}. 

\subsection{Defeaturing error and error representation}
Consider the situation in \cref{sec:defeaturing model problems} and let $u$ and $u_0$ be solutions to \cref{problem:exact problem} and one of the variants of \cref{problem:defeatured problem}, respectively. Then, we define the defeaturing error $e \in \honebd{0}{\rmbd_D}{\domain}$ by $e \coloneqq u - (u_0)_{|\domain}$.
Note that in all three cases of \cref{problem:defeatured problem}, the defeaturing error satisfies the PDE problem
\begin{align}
\label{eq:error pde}
    \begin{cases}
        -\Delta e = 0,& \text{ in } \domain,\\
        e = d_\defbd,& \text{ on } \defbd,\\
        e = 0,& \text{ on } \rmbd_D,\\
        \partial_n e = 0,& \text{ on } \rmbd_N,
    \end{cases}
\end{align}
where $\bderr \coloneqq g_\defbd - (u_0)_{|\defbd}$ denotes the error on the defeatured boundary.

We derive an error representation formula based on the PDE problem satisfied by the defeaturing error function. Let us first consider a case where no Neumann boundaries are present in the problem; that is, $\rmbd = \rmbd_D$. Then, it is clear that $\bderr \in\htracedbz{\defbd}$ and $e_{|\rmbd} \equiv 0 \in \htracedbz{\rmbd}$. Hence, we may apply the divergence theorem \cite{girault_finite_1986}[Corollary 2.6] to obtain
\begin{align}
\label{eq:dd and internal error representation formula}
    \honeseminorm{e}{\domain}^2 = \langle \neumop(\nabla e), e_{|\partial \domain} \rangle = \langle 
    \partialneumop{\rmbd}(\nabla e), \bderr \rangle +
    \langle \partialneumop{\rmbd}(\nabla e), e_{|\rmbd} \rangle = \langle 
    \partialneumop{\defbd}(\nabla e), \bderr \rangle.
\end{align}
In the presence of outer Neumann boundaries, which are not connected to the feature, see e.g. $\Tilde{\rmbd}_N$ in \cref{fig:features}, it is clear that $e_{|\Tilde{\rmbd}_N} \in \htracedbz{\Tilde{\rmbd}_N}$ and $e_{|\rmbd_D \cup \defbd} \in \htracedbz{\rmbd_D \cup \defbd}$ because $e$ vanishes on $\rmbd_D$. Since $\partialneumop{\Tilde{\rmbd}_N}(\nabla e) \equiv 0$, it follows that
\begin{align*}
    \langle \neumop(\nabla e), e_{|\partial \domain} \rangle = \langle 
    \partialneumop{\Tilde{\rmbd}_N}(\nabla e), e_{|\Tilde{\rmbd}_N} \rangle +
    \langle \partialneumop{\rmbd_D \cup \defbd}(\nabla e), e_{|\rmbd_D \cup \defbd} \rangle = \langle \partialneumop{\rmbd_D \cup \defbd}(\nabla e), e_{|\rmbd_D \cup \defbd} \rangle.
\end{align*}
Hence, the error representation formula in \cref{eq:dd and internal error representation formula} remains valid in this case.

In contrast, in the case of a Dirichlet-Neumann feature, $\bderr$ is in general not an element of $\htracedbz{\defbd}$, but merely of $\htrace{\defbd}$. Indeed, the defeaturing error need not vanish at the intersection $\overline{\defbd} \cap \overline{\rmbd_N^{(\mathrm{DN})}}$; see \cref{fig:features}. However, consider an arbitrary extension $\Tilde{d}_\defbd$ of $\bderr$ to $\htracedbz{\defbd \cup \rmbd_N^{(\mathrm{DN})}}$. Since $\neumop(\nabla e)$ vanishes on $\rmbd_N$, it holds that
\begin{align}
\label{eq:dn error representation formula}
    \honeseminorm{e}{\domain}^2 = \langle \partialneumop{\defbd}(\nabla e), e_{|\defbd \cup \rmbd_N} \rangle = \langle \partialneumop{\defbd}(\nabla e), \Tilde{d}_\gamma \rangle.
\end{align}
Therefore, once we have derived an error estimate for the Dirichlet-Dirichlet case from \cref{eq:dd and internal error representation formula}, we can reuse that argument for the Dirichlet-Neumann case if we are able to construct a well-behaved extension $\Tilde{d}_\defbd$ of the boundary error $\bderr$. Such a continuous extension operator is constructed in \cite{weder_extension_2025} and we have re-stated the result for completeness in \cref{subsec:extensions}[\cref{thm:boundary extension operator}].

We now derive reliable defeaturing error estimates for the three cases \cref{eq:ddd defeatured problem,eq:ndn defeatured problem,eq:internal defeatured problem} of \cref{problem:defeatured problem} based on the boundary representation formulas \cref{eq:dd and internal error representation formula,eq:dn error representation formula} and Sobolev estimates in trace spaces.

\subsection{Negative boundary features - Dirichlet-Dirichlet case}
In the\linebreak Dirichlet-Dirichlet case, we define the defeaturing estimator by
\begin{align}
\label{eq:dirichlet dirichlet estimator}
    \defestdd{u_0} \coloneqq \sqrt{2 \Ltwonorm{\bderr}{\defbd} \Ltwonorm{\nabla_t \bderr}{\defbd}},
\end{align}
where $\nabla_t \bderr$ denotes the tangential gradient of $\bderr$ along $\defbd$.

\begin{theorem}
\label{thm:dirichlet dirichlet reliability}
    The defeaturing error estimator defined in \cref{eq:dirichlet dirichlet estimator} is reliable for Dirichlet-Dirichlet features. That is, if $\overline{\defbd} \cap \overline{\rmbd_D} \neq \emptyset$ and $\overline{\defbd} \cap \overline{\rmbd_N} = \emptyset
    $, then
    \begin{align*}
        \honeseminorm{e}{\domain} \lesssim \defestdd{u_0}.
    \end{align*}
\end{theorem}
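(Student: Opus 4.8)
The plan is to start from the error representation formula \cref{eq:dd and internal error representation formula}, namely $\honeseminorm{e}{\domain}^2 = \langle \partialneumop{\defbd}(\nabla e), \bderr \rangle$, which is valid in the Dirichlet-Dirichlet case since $\bderr \in \htracedbz{\defbd}$ (the error vanishes on $\rmbd_D$ and $\overline{\defbd}$ touches only $\overline{\rmbd_D}$). First I would apply \cref{cor:size independence of neumann trace operator norm} with $v = \nabla e$ and $\genbd = \defbd$ — noting $\defbd$ is not a closed surface since it is a proper subset of $\partial\domain$ touching $\rmbd_D$, so $\partial\defbd \neq \emptyset$ — to obtain $\honeseminorm{e}{\domain}^2 \lesssim \hdivnorm{\nabla e}{\domain} \htracedbznorm{\bderr}{\defbd}$. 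Since $-\Delta e = 0$ in $\domain$, we have $\Div(\nabla e) = 0$, so $\hdivnorm{\nabla e}{\domain} = \honeseminorm{e}{\domain}$. Dividing through by $\honeseminorm{e}{\domain}$ yields the key intermediate bound $\honeseminorm{e}{\domain} \lesssim \htracedbznorm{\bderr}{\defbd}$.

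The remaining, and main, task is to bound $\htracedbznorm{\bderr}{\defbd}$ by the estimator $\defestdd{u_0} = \sqrt{2\Ltwonorm{\bderr}{\defbd}\Ltwonorm{\nabla_t\bderr}{\defbd}}$. Here I would invoke the interpolation-type inequality from \cref{subsec:poincare and interpolation} (the one referenced as being available for the space $\htracedbz{\genbd}$), which should say something of the form $\htracedbznorm{\mu}{\defbd} \lesssim \Ltwonorm{\mu}{\defbd}^{1/2}\,\bigl(\Ltwonorm{\mu}{\defbd} + \Ltwonorm{\nabla_t\mu}{\defbd}\bigr)^{1/2}$, interpolating the $H^{1/2}$-type norm between $L^2$ and $H^1$. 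Combined with the Poincaré inequality from \cref{lemma:poincare I} to absorb the lower-order term $\Ltwonorm{\bderr}{\defbd}$ into $\Ltwonorm{\nabla_t\bderr}{\defbd}$ (with the correct $|\defbd|$-scaling), this gives $\htracedbznorm{\bderr}{\defbd} \lesssim \Ltwonorm{\bderr}{\defbd}^{1/2}\Ltwonorm{\nabla_t\bderr}{\defbd}^{1/2} \simeq \defestdd{u_0}$, completing the argument.

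The main obstacle I anticipate is the bookkeeping of the explicit dependence on the feature size $|\defbd|$: both the Poincaré inequality and the interpolation inequality carry powers of $|\defbd|$, and one must check that the tangential-gradient term genuinely dominates so that these powers cancel cleanly and no spurious negative power of $|\defbd|$ survives. A secondary subtlety is justifying that $\bderr$ indeed lies in $\htracedbz{\defbd}$ with the tangential gradient $\nabla_t\bderr \in \Ltwo{\defbd}$ — this requires $g_\defbd$ and the trace of $u_0$ to be regular enough along $\defbd$ (and compatible with the vanishing of $e$ on $\rmbd_D$ at the junction), which should follow from the data assumptions in \cref{problem:exact problem} together with the extension chosen in the Dirichlet-Dirichlet defeatured problem. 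Everything else is routine once the correct interpolation inequality is in hand.
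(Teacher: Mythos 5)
Your proposal is correct and follows essentially the same route as the paper: the error representation formula \cref{eq:dd and internal error representation formula} together with \cref{cor:size independence of neumann trace operator norm} gives $\honeseminorm{e}{\domain} \lesssim \htracedbznorm{\bderr}{\defbd}$, and then the Poincaré inequality of \cref{lemma:poincare I} combined with the interpolation estimate \cref{eq:interpolation and poincare inequality} (which the paper states directly as $\htracedbzseminorm{\mu}{\genbd} \lesssim \sqrt{\Ltwonorm{\mu}{\genbd}\Ltwonorm{\nabla_t\mu}{\genbd}}$, so no lower-order term needs absorbing) bounds $\htracedbznorm{\bderr}{\defbd}$ by $\defestdd{u_0}$. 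The only difference is the order in which Poincaré and interpolation are applied, which is immaterial.
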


\begin{proof}
    Using the error representation formula \cref{eq:dd and internal error representation formula} and \cref{cor:size independence of neumann trace operator norm}, we immediately find
    \begin{align*}
        \honeseminorm{e}{\domain}^2 =  \langle \partialneumop{\defbd}(\nabla e), \bderr \rangle \lesssim \honeseminorm{e}{\domain} \htracedbznorm{\bderr}{\defbd}.
    \end{align*}
    Moreover, using the Poincaré and interpolation inequalities in \cref{lemma:poincare I} and\linebreak \cref{eq:interpolation and poincare inequality}, respectively, we find that
    \begin{align*}
        \htracedbznorm{\bderr}{\defbd}^2 \lesssim \left(1 + |\defbd|^{\frac{1}{n-1}}\right)\htraceseminorm{\bderr^\star}{\partial \domain}^2 \lesssim 2 \Ltwonorm{\bderr}{\defbd} \Ltwonorm{\nabla_t \bderr}{\defbd},
    \end{align*}
    which finishes the proof.
\end{proof}

\subsection{Negative boundary features - Dirichlet-Neumann case}
In the\linebreak Dirichlet-Neumann case, we define the defeaturing estimator by
\begin{align}
\label{eq:dirichlet neumann estimator}
    \defestdn{u_0} \coloneqq \defestdnavg{u_0} + \defestdnnavg{u_0},
\end{align}
where we define the average and non-average components, respectively, by
\begin{align*}
    \defestdnavg{u_0} \coloneqq |\defbd|^{\frac{n-2}{2(n-1)}}|\avg{\bderr}{\defbd}| && \text{ and } && \defestdnnavg{u_0} \coloneqq \sqrt{2\Ltwonorm{\bderr - \avg{\bderr}{\defbd}}{\defbd} \Ltwonorm{\nabla_t \bderr}{\defbd}}.
\end{align*}

\begin{theorem}
\label{thm:dirichlet neumann reliability}
    The defeaturing error estimator defined in \cref{eq:dirichlet neumann estimator} is reliable for Dirichlet-Neumann features. That is, if $\overline{\defbd} \cap \overline{\rmbd_N} \neq \emptyset$ and $\overline{\defbd} \cap \overline{\rmbd_D} = \emptyset$, then
    \begin{align*}
        \honeseminorm{e}{\domain} \lesssim \defestdn{u_0}.
    \end{align*}
\end{theorem}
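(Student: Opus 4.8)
The plan is to mirror the proof of \Cref{thm:dirichlet dirichlet reliability}. The obstruction is that in the Dirichlet-Neumann case $\bderr$ lies only in $\htrace{\defbd}$, not in $\htracedbz{\defbd}$, because it need not vanish on $\overline{\defbd}\cap\overline{\rmbd_N^{(\mathrm{DN})}}$; we therefore work with an extension $\Tilde d_\gamma\in\htracedbz{\defbd\cup\rmbd_N^{(\mathrm{DN})}}$ of $\bderr$ and, to isolate the two parts of the estimator, we decompose $\bderr = \avg{\bderr}{\defbd} + \bigl(\bderr-\avg{\bderr}{\defbd}\bigr)$.

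First I would fix, by the size-robust bounded extension operator of \Cref{thm:boundary extension operator}, an extension $\Tilde d_\gamma\in\htracedbz{\defbd\cup\rmbd_N^{(\mathrm{DN})}}$ of $\bderr$, and start from the error representation \cref{eq:dn error representation formula}. Since $e$ is harmonic, $\nabla e\in\hdiv{\domain}$ with $\hdivnorm{\nabla e}{\domain}=\honeseminorm{e}{\domain}$, so \cref{eq:neumann operator norm} applied to $\neumop(\nabla e)$ and $\Tilde d_\gamma^\star\in\htrace{\partial\domain}$ gives
\begin{align*}
    \honeseminorm{e}{\domain}^2 = \langle\neumop(\nabla e),\Tilde d_\gamma^\star\rangle \leq \honeseminorm{e}{\domain}\,\htraceinfnorm{\Tilde d_\gamma^\star}{\partial\domain},
\end{align*}
hence $\honeseminorm{e}{\domain}\leq\htraceinfnorm{\Tilde d_\gamma^\star}{\partial\domain}$. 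The key point --- this is why we keep the natural norm rather than invoking \Cref{cor:size independence of neumann trace operator norm} --- is that \Cref{prop:norm equivalence for htracedbz}, applied on $\genbd\coloneqq\defbd\cup\rmbd_N^{(\mathrm{DN})}$ (noting $|\genbd|\geq|\defbd|$), retains a negative power of the feature size on the $L^2$ term:
\begin{align*}
    \honeseminorm{e}{\domain} \lesssim |\defbd|^{\frac{-1}{2(n-1)}}\Ltwonorm{\Tilde d_\gamma}{\genbd} + \htracedbzseminorm{\Tilde d_\gamma}{\genbd}.
\end{align*}

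Next I would use the bounds from \Cref{thm:boundary extension operator} to replace the norms of $\Tilde d_\gamma$ by norms of $\bderr$ on $\defbd$, with constants independent of the feature size, and then split off the mean. Writing $c\coloneqq\avg{\bderr}{\defbd}$ and $\bderr_0\coloneqq\bderr-c$, orthogonality on $\defbd$ gives $\Ltwonorm{\bderr}{\defbd}^2 = |c|^2|\defbd| + \Ltwonorm{\bderr_0}{\defbd}^2$, while $\htraceseminorm{\bderr}{\defbd}=\htraceseminorm{\bderr_0}{\defbd}$ and $\nabla_t\bderr=\nabla_t\bderr_0$. The constant part produces exactly $|\defbd|^{\frac{-1}{2(n-1)}}|c|\,|\defbd|^{\frac12} = |\defbd|^{\frac{n-2}{2(n-1)}}|c| = \defestdnavg{u_0}$. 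For the zero-mean part one argues as in the Dirichlet-Dirichlet case: the Poincaré inequality \Cref{lemma:poincare II} (for zero-average functions) absorbs the weight $|\defbd|^{\frac{-1}{2(n-1)}}$ into $\htraceseminorm{\bderr_0}{\defbd}$, and the interpolation inequality \cref{eq:interpolation and poincare inequality} bounds $\htraceseminorm{\bderr_0}{\defbd}^2\lesssim\Ltwonorm{\bderr_0}{\defbd}\Ltwonorm{\nabla_t\bderr_0}{\defbd}=\Ltwonorm{\bderr-c}{\defbd}\Ltwonorm{\nabla_t\bderr}{\defbd}$, i.e. $\htraceseminorm{\bderr_0}{\defbd}\lesssim\defestdnnavg{u_0}$. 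Summing the two contributions yields $\honeseminorm{e}{\domain}\lesssim\defestdnavg{u_0}+\defestdnnavg{u_0}=\defestdn{u_0}$.

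The hard part is the construction of the extension $\Tilde d_\gamma$ across the simplified Neumann boundary together with the precise tracking of powers of $|\defbd|$ (the content of \Cref{thm:boundary extension operator}, from \cite{weder_extension_2025}): the extension must transition from $\bderr$ on $\defbd$ to $0$ on $\partial\domain\setminus(\defbd\cup\rmbd_N^{(\mathrm{DN})})$ over a region of the feature's length scale $|\defbd|^{\frac{1}{n-1}}$, so that, after pairing the $L^2$ mass $\Ltwonorm{\bderr}{\defbd}\gtrsim|c|\,|\defbd|^{\frac12}$ of the constant part with the negative exponent in \Cref{prop:norm equivalence for htracedbz}, one recovers precisely the weight $|\defbd|^{\frac{n-2}{2(n-1)}}$ and not a larger power of $|\defbd|$; using the size-independent estimate of \Cref{cor:size independence of neumann trace operator norm} instead would be too lossy here. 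A minor side point is to check that $\partial(\defbd\cup\rmbd_N^{(\mathrm{DN})})\neq\emptyset$ in $\partial\domain$, so that \Cref{prop:norm equivalence for htracedbz} and the extension theorem apply; this holds because the remaining Dirichlet boundary is nonempty.
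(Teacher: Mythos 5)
Your proof is correct and follows essentially the same route as the paper: extend the boundary error to $\defbd\cup\rmbd_N$ via \cref{thm:boundary extension operator}, pair with the Neumann trace of $\nabla e$, and then use \cref{lemma:poincare II} and the interpolation inequality \cref{eq:interpolation and poincare inequality} to recover the two estimator components; whether one splits off the mean before or after applying the extension bounds is immaterial since the operator is linear. One side remark is off: the paper does invoke \cref{cor:size independence of neumann trace operator norm} here (applied to the extensions of $\bderr-\avg{\bderr}{\defbd}$ and of $1$, which lie in $\htracedbz{\genbd}$ and are not constant there), and this is not lossy --- it yields the same weight $|\defbd|^{\frac{n-2}{2(n-1)}}$ on the average term --- so your explicit unwrapping via \cref{eq:neumann operator norm} and \cref{prop:norm equivalence for htracedbz} is equivalent rather than necessary.
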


\begin{proof}
First, we point out that in the Dirichlet-Neumann case, $\bderr$ is in general not an element of $\htracedbz{\defbd}$. For ease of notation, we define $\genbd \coloneqq \defbd \cup \rmbd_N$. We now extend $\bderr$ to $\genbd$ using the extension operator from \cref{thm:boundary extension operator}. By inserting the extension into the representation formula \cref{eq:dn error representation formula}, we obtain from Cauchy-Schwarz and \cref{cor:size independence of neumann trace operator norm} that
\begin{align*}
    \honeseminorm{e}{\domain}^2 &= \langle \partialneumop{\genbd}(\nabla e), \extop{\defbd}{\genbd}(\bderr - \avg{\bderr}{\defbd}) + \avg{\bderr}{\defbd} \extop{\defbd}{\genbd}(1) \rangle\\
    &\lesssim \honeseminorm{e}{\domain} \left(\htracedbznorm{\extop{\defbd}{\genbd}(\bderr - \avg{\bderr}{\defbd})}{\genbd} + |\avg{\bderr}{\defbd}| \htracedbznorm{\extop{\defbd}{\genbd}(1)}{\genbd}\right),
\end{align*}
where $1 \in \htrace{\defbd}$ denotes the constant function.
The continuity property of $\extop{\defbd}{\genbd}$ in \cref{thm:boundary extension operator} and the Poincaré inequality in \cref{lemma:poincare II} yield the estimate
\begin{align}
    \honeseminorm{e}{\domain}^2 &\lesssim |\defbd|^{\frac{-1}{n-1}} \Ltwonorm{\bderr - \avg{\bderr}{\defbd}}{\defbd}^2 + \htraceseminorm{\bderr - \avg{\bderr}{\defbd}}{\defbd}^2 + |\defbd|^{\frac{-1}{n-1}} |\avg{\bderr}{\defbd}|^2 \Ltwonorm{1}{\defbd}^2\nonumber
    \\
    \label{eq:dn htrace norm estimate}
    &\lesssim 2 \htraceseminorm{\bderr - \avg{\bderr}{\defbd}}{\defbd}^2 + |\defbd|^{\frac{n-2}{n-1}} |\avg{\bderr}{\defbd}|^2.
\end{align}
By applying the interpolation estimate from \cref{eq:interpolation and poincare inequality} to the first term in \cref{eq:dn htrace norm estimate}, we find after taking the square root on both sides that
\begin{align*}
    \honeseminorm{e}{\domain} \lesssim \sqrt{2 \Ltwonorm{\bderr - \avg{\bderr}{\defbd}}{\defbd} \Ltwonorm{\nabla_t \bderr}
    {\defbd}} + |\defbd|^{\frac{n-2}{2(n-1)}} |\avg{\bderr}{\defbd}|,
\end{align*}
as desired.
\end{proof}

\subsection{Internal negative features}
We cannot apply the previous arguments to internal features since \cref{cor:size independence of neumann trace operator norm} does not hold; however, we can treat the average of $\bderr$ separately, as noted in \cref{rem:size independence for closed surfaces}.
Additionally, it can be shown that \cref{prop:norm equivalence for htracedbz} yields only a suboptimal estimate for constant functions on internal features. The following estimate is sharp instead, assuming no further knowledge of the geometry:

\begin{lemma}
\label{lemma:natural norm estimate for constants}
Let $\domain = \defdomain \setminus \feat \subset \R^n, n \geq 2$ be a Lipschitz domain with internal Lipschitz feature $\feat$ and feature boundary $\defbd = \partial \feat$. We write $\rmbd = \partial \defdomain$ and we denote by $m_\feat$ the barycenter of $\feat$.

Then, it holds for $1 \in \htracedbz{\defbd}$ that,
\begin{align*}
    \htraceinfnorm{1^\star}{\partial \domain} \lesssim \Bar{c}_\defbd,
\end{align*}
where
\begin{align*}
    \Bar{c}_\defbd \coloneqq  
        \begin{cases}
            \sqrt{\frac{2 \pi}{|\log\left(s_\defbd\right)|}}, & n = 2,\\
            \sqrt{\frac{2 \pi \,\diam{\defbd}}{1 - s_\defbd}}, & n=3
        \end{cases}, && \text{ and } && s_\defbd \coloneqq \frac{\diam{\defbd}}{2 \dist(m_F, \rmbd)}.
\end{align*}
\end{lemma}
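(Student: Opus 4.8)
The plan is to estimate the natural norm $\htraceinfnorm{1^\star}{\partial \domain}$ by exhibiting an explicit competitor function $u \in \hone{\domain}$ whose trace on $\partial\feat$ equals $1$ and whose trace on the outer boundary $\rmbd$ vanishes, and then computing (or bounding) its Dirichlet energy $\honeseminorm{u}{\domain}$; by definition \cref{eq:def natural norm} this energy dominates $\htraceinfnorm{1^\star}{\partial\domain}^2$. The natural candidate is a radially symmetric function centered at the barycenter $m_\feat$: take $u$ to be the harmonic function in an annulus $\ball{R}{m_\feat}\setminus\overline{\ball{r}{m_\feat}}$ that equals $1$ on the inner sphere and $0$ on the outer sphere, with inner radius $r \simeq \diam{\defbd}$ chosen so that $\feat \subset \ball{r}{m_\feat}$ (possible by isotropy and the definition of the manifold diameter), and outer radius $R \simeq \dist(m_\feat,\rmbd)$ chosen so that $\ball{R}{m_\feat}\cap\rmbd = \emptyset$; extend $u$ by $1$ inside $\ball{r}{m_\feat}$ and by $0$ outside $\ball{R}{m_\feat}$, restricted to $\domain$. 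This $u$ lies in $\honebd{1^\star}{\partial\domain}{\domain}$, so it is admissible in the infimum.

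The key computation is then the Dirichlet energy of the radial harmonic transition. In $n=2$ the capacitary potential of the annulus is $u(x) = \log(|x-m_\feat|/R)/\log(r/R)$, whose Dirichlet integral is $2\pi/\log(R/r)$; with $r \simeq \diam{\defbd}$ and $R \simeq \dist(m_\feat,\rmbd)$ this is $\simeq 2\pi/|\log(s_\defbd)|$, giving the first branch. In $n=3$ the potential is $u(x) = \big(\tfrac{1}{|x-m_\feat|}-\tfrac{1}{R}\big)/\big(\tfrac1r - \tfrac1R\big)$, with Dirichlet integral $4\pi/(1/r - 1/R) = 4\pi r R/(R-r) \lesssim 4\pi r/(1 - r/R) \simeq 4\pi\,\diam{\defbd}/(1-s_\defbd)$, giving the second branch up to the harmless constant. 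Taking square roots yields $\honeseminorm{u}{\domain} \lesssim \Bar c_\defbd$, hence $\htraceinfnorm{1^\star}{\partial\domain} \le \honeseminorm{u}{\domain} \lesssim \Bar c_\defbd$.

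A couple of points need care but are not deep. First, one must justify the choice of radii: the inner radius is controlled by $\diam{\defbd}$ because $\feat$ is isotropic (so $\feat$ — and hence $\defbd$ — sits inside a ball of radius $\simeq \diam{\defbd}$ about any of its points, in particular about $m_\feat$, up to a fixed factor), and the outer radius by $\dist(m_\feat,\rmbd)$ essentially by definition; the ratio $r/R$ is then $\simeq s_\defbd$. Second, one should check that $r < R$ so the annulus is non-degenerate, i.e.\ $s_\defbd < 1$; this is exactly the condition that $\feat$ actually fits inside $\defdomain$ with room to spare, and if it fails the bound is vacuous (the right-hand side blows up), so we may assume $s_\defbd$ is bounded away from $1$ — or, more precisely, the stated estimate degenerates gracefully since $\Bar c_\defbd \to \infty$ as $s_\defbd \to 1^-$.

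The main obstacle is simply bookkeeping the implicit constants so that the $\simeq$ relations between the geometric radii and $\diam{\defbd}$, $\dist(m_\feat,\rmbd)$ propagate correctly through the logarithm in $n=2$ (where one must be careful that $\log(R/r)$ and $|\log(s_\defbd)|$ are comparable, using $s_\defbd$ bounded away from $1$) and through the fraction $rR/(R-r)$ in $n=3$; there is no analytic difficulty, only the need to keep the estimate clean and dimensionally honest. Sharpness (the claim that this bound cannot be improved without further geometric assumptions) would be argued by noting that for $\feat$ an actual ball concentric in a ball $\defdomain$, the capacitary potential above is the exact minimizer, so the inequality is an equality up to the absolute constants.
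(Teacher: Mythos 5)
Your proposal is correct and follows essentially the same route as the paper's proof: both bound the natural norm by the Dirichlet energy of the explicit radial capacitary potential of the annulus $\ball{R}{m_\feat}\setminus\overline{\ball{r}{m_\feat}}$ (extended by $1$ inside the inner ball and by $0$ outside the outer one), with $r\simeq\diam{\defbd}$ and $R=\dist(m_\feat,\rmbd)$, and then evaluate that energy as $2\pi/|\log(r/R)|$ in $n=2$ and $\simeq 4\pi r/(1-r/R)$ in $n=3$. Your extra remarks on the choice of radii and the non-degeneracy condition $s_\defbd<1$ are consistent with, and slightly more careful than, the paper's "short computation."
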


\begin{proof}
For ease of notation, we define $r \coloneqq \diam{\defbd} / 2$ and $R \coloneqq \dist(m_F, \rmbd)$ such that $s_\defbd = r / R$. Furthermore, assume without loss of generality that $m_\feat = 0$. Then, we have in particular that 
\begin{align*}
    \overline{F} \subset \ball{r}{0} \subset \ball{R}{0} \subset \defdomain.
\end{align*}
By definition of the natural norm, every function $v \in \hone{\domain}$ such that $v_{|\partial \domain} = 1^\star \in \htrace{\partial \domain}$ satisfies
\begin{align*}
    \htraceinfnorm{1^\star}{\partial \domain} \leq \honeseminorm{v}{\domain}.
\end{align*}
Consider the function $v_0 \in \hone{\ball{R}{0} \setminus \ball{r}{0}}$ solution to the PDE problem
\begin{align*}
    \begin{cases}
        -\Delta v_0 = 0, \text{ in } \ball{R}{0} \setminus \overline{\ball{r}{0}},\\
        v_0 = 1, \text{ on } \partial \ball{r}{0}\\
        v_0 = 0, \text{ on } \partial \ball{R}{0},
    \end{cases}
\end{align*}
and define a lifting $v \in \hone{\domain}$ of $1^\star \in \htrace{\partial \domain}$ by setting
\begin{align*}
v \coloneqq \ind{\ball{r}{0} \setminus \overline{\feat}} + \ind{\ball{R}{0} \setminus \overline{\ball{r}{0}}} v_0,
&& \text{such that} &&
    \htraceinfnorm{1^\star}{\partial \domain} \leq \honeseminorm{v}{\domain} = \honeseminorm{v_0}{\ball{R}{0} \setminus \overline{\ball{r}{0}}}.
\end{align*}
A short computation shows that
\begin{align*}
\honeseminorm{v_0}{\domain} \simeq
    \begin{cases}
        \sqrt{\frac{2 \pi}{|\log(r / R)|}},& n=2\\
        \sqrt{\frac{4 \pi r}{1 - r/R}},& n=3,
    \end{cases}
\end{align*}
which concludes the proof.
\end{proof}

We now define the defeaturing error estimator for internal features by
\begin{align}
\label{eq:internal estimator}
    \defestint{u_0} \coloneqq  \defestintavg{u_0} + \defestintnavg{u_0},
\end{align}
where the average and non-average components are, respectively, defined by
\begin{align*}
    \defestintavg{u_0} \coloneqq \Bar{c}_\defbd |\avg{\bderr}{\defbd}|, & & \text{ and } & & \defestintnavg{u_0} \coloneqq 2 \sqrt{\Ltwonorm{\bderr - \avg{\bderr}{\defbd}}{\defbd} \Ltwonorm{\nabla_t \bderr}{\defbd}},
\end{align*}
and $\Bar{c}_\defbd$ is defined in \cref{lemma:natural norm estimate for constants}.

\begin{theorem}
\label{thm:internal estimate reliability}
    The defeaturing error estimator defined in \cref{eq:dirichlet neumann estimator} is reliable for internal features. That is, if $\overline{\defbd} \cap \overline{\partial \domain} = \emptyset$, then
    \begin{align*}
        \honeseminorm{e}{\domain} \lesssim \defestint{u_0}.
    \end{align*}
\end{theorem}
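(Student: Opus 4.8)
The plan is to imitate the proofs of \cref{thm:dirichlet dirichlet reliability,thm:dirichlet neumann reliability}, starting again from the error representation formula \cref{eq:dd and internal error representation formula}, which remains valid for internal features:
\begin{align*}
    \honeseminorm{e}{\domain}^2 = \langle \partialneumop{\defbd}(\nabla e), \bderr \rangle .
\end{align*}
The new difficulty is that $\defbd = \partial\feat$ is a closed surface, so $\htracedbz{\defbd}$ contains the constants and \cref{cor:size independence of neumann trace operator norm} is not available, as already noted in \cref{rem:size independence for closed surfaces}. I would therefore split the boundary error into its mean value and its fluctuation, $\bderr = (\bderr - \avg{\bderr}{\defbd}) + \avg{\bderr}{\defbd}\cdot 1$ with $1 \in \htrace{\defbd}$ the constant function, and control the two resulting pairings by $\defestintavg{u_0}$ and $\defestintnavg{u_0}$, respectively.

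For the constant mode I would estimate the pairing by duality directly: since $\Div(\nabla e) = \Delta e = 0$ in $\domain$ we have $\hdivnorm{\nabla e}{\domain} = \honeseminorm{e}{\domain}$, and the definition of the partial Neumann trace operator in \cref{eq:definition of partial neumann operator} together with \cref{eq:neumann operator norm} gives
\begin{align*}
    \avg{\bderr}{\defbd}\,\langle \partialneumop{\defbd}(\nabla e), 1 \rangle = \avg{\bderr}{\defbd}\,\langle \neumop(\nabla e), 1^\star \rangle \leq |\avg{\bderr}{\defbd}|\, \honeseminorm{e}{\domain}\, \htraceinfnorm{1^\star}{\partial\domain}.
\end{align*}
The crucial input here is \cref{lemma:natural norm estimate for constants}, which yields $\htraceinfnorm{1^\star}{\partial\domain} \lesssim \Bar{c}_\defbd$, so this contribution is bounded by $\honeseminorm{e}{\domain}\,\defestintavg{u_0}$.

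For the fluctuation $\bderr - \avg{\bderr}{\defbd}$, which has zero mean on $\defbd$, I would rerun the argument underlying \cref{cor:size independence of neumann trace operator norm}: \cref{prop:norm equivalence for htracedbz} still applies to $\defbd$, and — precisely as announced in \cref{rem:size independence for closed surfaces} — the $\Ltwonorm{\cdot}{\defbd}$ term coming from that proposition can be absorbed using the zero-mean Poincaré inequality \cref{lemma:poincare II} instead of \cref{lemma:poincare I}, giving
\begin{align*}
    \langle \partialneumop{\defbd}(\nabla e), \bderr - \avg{\bderr}{\defbd}\rangle \lesssim \honeseminorm{e}{\domain}\, \htracedbzseminorm{\bderr - \avg{\bderr}{\defbd}}{\defbd}.
\end{align*}
The interpolation inequality \cref{eq:interpolation and poincare inequality} then bounds $\htracedbzseminorm{\bderr - \avg{\bderr}{\defbd}}{\defbd}^2$ by a constant multiple of $\Ltwonorm{\bderr - \avg{\bderr}{\defbd}}{\defbd}\Ltwonorm{\nabla_t \bderr}{\defbd}$, using $\nabla_t(\bderr - \avg{\bderr}{\defbd}) = \nabla_t\bderr$; this is a multiple of $\defestintnavg{u_0}^2$. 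Collecting the two estimates and dividing through by $\honeseminorm{e}{\domain}$ concludes the proof.

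The step I expect to be the main obstacle — and the reason internal features need a separate treatment at all — is the bound on the constant mode: \cref{prop:norm equivalence for htracedbz} only gives the suboptimal estimate $\htraceinfnorm{1^\star}{\partial\domain}\lesssim |\defbd|^{\frac{n-2}{2(n-1)}}$, whereas the logarithmic (in $n=2$) respectively $(1-s_\defbd)^{-1/2}$ (in $n=3$) dependence encoded in $\Bar{c}_\defbd$ is what is actually needed; the technical heart is the construction in \cref{lemma:natural norm estimate for constants} of the admissible radial lifting supported in the annulus $\ball{R}{0}\setminus\ball{r}{0}$ and the computation of its Dirichlet energy, together with the verification that the zero-mean reduction is compatible with the closed-surface geometry so that \cref{lemma:poincare II} may be invoked.
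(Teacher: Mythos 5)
Your proposal is correct and follows essentially the same route as the paper: the paper simply replaces your duality pairing $\langle \neumop(\nabla e),(\bderr)^\star\rangle \leq \honeseminorm{e}{\domain}\,\htraceinfnorm{(\bderr)^\star}{\partial\domain}$ by the equivalent variational identity $\honeseminorm{e}{\domain}=\htraceinfnorm{(\bderr)^\star}{\partial\domain}$ and then performs the same mean/fluctuation split with \cref{lemma:natural norm estimate for constants} for the constant mode and \cref{prop:norm equivalence for htracedbz} plus \cref{lemma:poincare II} for the fluctuation. The one step you gloss over is the final application of \cref{eq:interpolation and poincare inequality} to $\htracedbzseminorm{\bderr-\avg{\bderr}{\defbd}}{\defbd}$: on a closed surface the estimate \cref{eq:interpolation II} cannot be invoked as a black box (for a constant the right-hand side vanishes while the nonlocal cross term $\int_{\defbd}\int_{\partial\domain\setminus\defbd}|x-y|^{-n}|\mu|^2$ does not, and its proof rests on a vanishing-trace Poincar\'e inequality). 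The paper bridges this by first showing, using $\dist(\partial\domain\setminus\defbd,\defbd)\simeq|\defbd|^{\frac{1}{n-1}}$ for an internal feature and \cref{lemma:poincare II}, that $\htracedbzseminorm{\bderr-\avg{\bderr}{\defbd}}{\defbd}\lesssim\htraceseminorm{\bderr-\avg{\bderr}{\defbd}}{\defbd}$, and only then applying the zero-mean interpolation estimate \cref{eq:interpolation I}; with that intermediate reduction added, your argument is complete.
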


\begin{proof}
    Since the defeaturing error $e \in \hone{\domain}$ is the unique weak solution to problem \cref{eq:error pde}, it is the unique minimizer of the $H^1$-norm for the boundary condition $(\bderr)^\star \in \htrace{\partial \domain}$. Therefore, we have
    \begin{align*}
        \honeseminorm{e}{\domain} = \htraceinfnorm{(\bderr)^\star}{\partial \domain} \leq \htraceinfnorm{(\bderr - 
        \avg{\bderr}{\defbd})^\star}{\partial \domain} + |\avg{\bderr}{\defbd}|\htraceinfnorm{1^\star}{\partial \domain}.
    \end{align*}
    Using \cref{lemma:natural norm estimate for constants}, we find for the second term,
    \begin{align*}
        |\avg{\bderr}{\defbd}|\htraceinfnorm{1^\star}{\partial \domain} \lesssim \Bar{c}_\defbd |\avg{\bderr}{\defbd}|.
    \end{align*}
    Combining \cref{prop:norm equivalence for htracedbz} and \cref{lemma:poincare II} yields for the first term,
    \begin{align}
    \label{eq:internal features natural norm estimate}
        \htraceinfnorm{(\bderr - 
        \avg{\bderr}{\defbd})^\star}{\partial \domain} &\lesssim |\defbd|^{\frac{-1}{2(n-1)}} \Ltwonorm{\bderr - \avg{\bderr}{\defbd}}{\defbd} + \htracedbzseminorm{\bderr - \avg{\bderr}{\defbd}}{\defbd}\nonumber\\
        &\lesssim \htraceseminorm{\bderr - \avg{\bderr}{\defbd}}{\defbd} + \htracedbzseminorm{\bderr - \avg{\bderr}{\defbd}}{\defbd}.
    \end{align}
    Next, we note that in the case of an interior feature, we have $\dist(\partial \domain, \defbd) > 0$, and therefore
    \begin{nalign}
    \label{eq:interior feature norm equivalence}
        \htracedbzseminorm{\bderr - \avg{\bderr}{\defbd}}{\defbd}^2 = \htraceseminorm{\bderr - \avg{\bderr}{\defbd}}{\defbd}^2 + \int_{\partial \domain \setminus \defbd} \int_{\defbd} \frac{|\bderr(x) - \avg{\bderr}{\defbd}|^2}{|x - y|^n}\dd{s(x)}\dd{s(y)}
        \\
        \lesssim \htraceseminorm{\bderr - \avg{\bderr}{\defbd}}{\defbd}^2 + \frac{|\partial \domain \setminus \defbd|}{\dist(\partial \domain \setminus \defbd, \defbd)^n} \Ltwonorm{\bderr - \avg{\bderr}{\defbd}}{\defbd}^2
        \\
        \lesssim \htraceseminorm{\bderr - \avg{\bderr}{\defbd}}{\defbd}^2,
    \end{nalign}
    where we have applied \cref{lemma:poincare II} in the last inequality and used the fact that $\dist(\partial \domain \setminus \defbd, \defbd) \simeq |\defbd|^{\frac{1}{n-1}}$ and that the measure of $\partial \domain \setminus \defbd$ is proportional to the one of $\defbd$ under rescaling.
    By inserting the estimate \cref{eq:interior feature norm equivalence} into \cref{eq:internal features natural norm estimate} and applying \cref{lemma:poincare II} together with \cref{eq:interpolation and poincare inequality}, we finally obtain
    \begin{align*}
        \htraceinfnorm{(\bderr - 
        \avg{\bderr}{\defbd})^\star}{\partial \domain} \lesssim 2 \htraceseminorm{\bderr - \avg{\bderr}{\defbd}}{\defbd} \lesssim
        2 \sqrt{\Ltwonorm{\bderr - \avg{\bderr}{\defbd}}{\defbd} \Ltwonorm{\nabla_t \bderr}{\defbd}},
    \end{align*}
    which concludes the proof.
\end{proof}

\subsection{Extension to multi-feature geometries}
Given the structure of the estimators \cref{eq:dirichlet dirichlet estimator,eq:dirichlet neumann estimator,eq:internal estimator}, nothing prevents us from extending the present results to multi-feature geometries in the same way as \cite{antolin_analysisaware_2024, buffa_adaptive_2024, chanon_adaptive_2022}; that is, the multi-feature estimator is given by the square root of the squared estimators for each individual feature. However, note that the constant $\Bar{c}_\defbd$ in the estimator for internal features \cref{eq:internal estimator} then also depends on the distance to all the other features. In applications, the latter information must be retained after the defeaturing step together with the defeatured boundary information.

\section{Numerical experiments}
\label{sec:numerical experiments}
In this section, we present numerical experiments validating the theory developed above. The computational geometries and meshes in two and three dimensions were created with the help of \texttt{gmsh} \cite{geuzaine_gmsh_2009}. The meshes are, by default, conformal across lower dimensional entities in the geometry. Hence, the exact and defeatured geometries could be meshed simultaneously. The relevant sub-meshes and degrees of freedom were then extracted at the simulation stage, making the comparison between the exact and defeatured solutions straightforward. The finite element simulations were implemented using the \texttt{FEniCSx} library \cite{baratta_dolfinx_2023}. Moreover, fine meshes with local refinement around the features were used so that numerical errors could be neglected. Nonetheless, we stress that the estimators introduced above are independent of the discretization method and the geometry, provided that one can numerically integrate along the feature boundary.

When assessing the performance of the estimators, we will be especially interested in the \emph{effectivity index} $\esteff$ defined by
\begin{align*}
    \esteff \coloneqq \frac{\mathcal{E}(u_0)}{\honeseminorm{e}{\domain}},
\end{align*}
where $\mathcal{E}(u_0)$ denotes one of the defeaturing estimators \cref{eq:dirichlet dirichlet estimator,eq:dirichlet neumann estimator,eq:internal estimator}.
We aim for this constant to be close to one for any feature. Based on \cref{thm:dirichlet dirichlet reliability,thm:dirichlet neumann reliability,thm:internal estimate reliability}, we expect this quantity to remain approximately constant across different feature sizes. Furthermore, it reflects the dependency of the hidden constant in the estimate on quantities, which are not explicitly tracked by the estimator, such as the feature shape.

\subsection{Dirichlet-Dirichlet features}
We start by verifying the reliability of the estimator for Dirichlet-Dirichlet features with respect to the feature's size and shape in two and three dimensions. In addition, we will demonstrate the estimator's ability to capture the relationship between the geometry and the physics by considering two different types of boundary conditions: We call a combination of feature boundary condition $g_\defbd$ and simplified boundary condition $\Tilde{g}_D$ \emph{regular}, if the solution of \cref{eq:exact poisson problem} converges to the solution of \cref{eq:ddd defeatured problem} in the limit $|\defbd| \to 0$, and \emph{singular} otherwise. In other words, if we impose a singular boundary condition on a feature, its presence introduces a singular perturbation to the defeatured problem. Consequently, we expect the defeaturing error to remain non-negligible regardless of the feature size, while for regular boundary conditions, the error vanishes in the limit $|\defbd| \to 0$. An example of a singular boundary condition is provided in the following experiment in two dimensions, while the regular case is treated in the three-dimensional one.

\paragraph{Two-dimensional geometries}
Consider the following setup: The defeatured domain is the unit square $\defdomain = [-\frac 1 2, \frac 1 2]^2 \subset \R^2$ with the negative features cut out from the top side, as illustrated in \cref{fig:boundary features}. For the triangle feature, we consider the opening angle $\alpha = 15\degree$.
We impose a constant source term $f \equiv 1$ in $\domain$, extended to $\Tilde{f} \equiv 1$ in $\defdomain$. The Dirichlet data on the feature is set to
\begin{align*}
    g_{\defbd}(\theta) = \sin(\theta), & & \theta = \arctan\left(\frac{x_2 - \frac{1}{2}}{x_1}\right), & & \bx = [x_1, x_2]^\top \in \defbd,
\end{align*}
while homogeneous Dirichlet boundary conditions are applied to the outer boundary $\rmbd$. Note that this choice of Dirichlet boundary condition $g_{\defbd}$ on the feature boundary leads to a singular perturbation of the PDE problem; indeed, for $|\defbd| \to 0$, the exact problem does not converge to the defeatured one. Hence, it is natural to expect that the defeaturing error does not vanish in the limit $|\defbd| \to 0$.

The results in \cref{fig:num:dirdir:shape} suggest that the estimator is reliable and the hidden constant is independent of the feature size, at least for small enough features. We remark that the shape dependence is weak and that the hidden constant is close to one.
Moreover, we indeed observe an almost constant defeaturing error with respect to the feature size, which is due to the singular perturbation introduced by the feature boundary condition $g_\defbd$. Nevertheless, the estimator accurately captures this relationship of the geometry and the imposed boundary condition.

\begin{figure}
    \centering
    \begin{tikzpicture}
    \def\side{2.5} 
    \def\spacing{3} 
    \def\DiskRadius{0.4}
    \def\SquareS{0.6}
    \def\Angle{20}
    \def\TriangleBase{0.25}
    \def\NmnMarginFactor{0.25}

    \foreach \i in {0,1,2} {
        \draw[thick, gray] (\i*\spacing,0) rectangle ++(\side,\side);
        \node at (\i*\spacing + 0.8*\side, 0.8*\side) {$\domain$};
    }

    \draw[thick, blue1] (0.5*\side-\DiskRadius,\side) arc (180:360:\DiskRadius);
    \node[blue1] at (0.5*\side, 0.75 *\side) {$\defbd$};
    \node[red1] at (0.5*\side, 1.1 *\side) {$\defbd_0$};
    \draw[thick, red1] (0.5*\side-\DiskRadius,\side) -- (0.5*\side+\DiskRadius,\side);


    \draw[thick, blue1] (\spacing + 0.5*\side - \SquareS / 2, \side) -- (\spacing + 0.5*\side - \SquareS / 2, \side - \SquareS) -- (\spacing + 0.5*\side + \SquareS / 2, \side - \SquareS) -- (\spacing + 0.5*\side + \SquareS / 2, \side);
    \draw[thick, red1] (\spacing + 0.5*\side - \SquareS / 2, \side) -- (\spacing + 0.5*\side + \SquareS / 2, \side);
    \node[blue1] at (\spacing + 0.5*\side, 0.65 *\side) {$\defbd$};
    \node[red1] at (\spacing + 0.5*\side, 1.1 *\side) {$\defbd_0$};

    \draw[gray, dotted] (2 * \spacing + 0.5*\side, \side) -- ($(2 * \spacing + 0.5 * \side, \side) - (0, \TriangleBase / tan(\Angle)$);
    \draw[gray] ($(2 * \spacing + 0.5 * \side, \side) - (0, 0.3 * \TriangleBase / tan(\Angle)$) arc (90:90-1.3 * \Angle:0.4);
    \draw[thick, blue1] (2 * \spacing + 0.5*\side - \TriangleBase, \side) -- ($(2 * \spacing + 0.5 * \side, \side) - (0, \TriangleBase / tan(\Angle)$) -- (2 * \spacing + 0.5*\side + \TriangleBase, \side);

    \node[node font=\tiny] at (2 * \spacing + 0.5*\side + 0.45 * \TriangleBase, \side - 0.1) {$\alpha$};
    
    \draw[thick, red1] (2 *\spacing + 0.5*\side - \TriangleBase, \side) -- (2 * \spacing + 0.5*\side + \TriangleBase, \side);
    
    \node[blue1] at (2*\spacing + 0.5*\side - 0.25, 0.75 *\side) {$\defbd$};
    \node[red1] at (2*\spacing + 0.5*\side, 1.1 *\side) {$\defbd_0$};  
\end{tikzpicture}
    \caption{Illustration of the boundary features considered in the experiment (L-R): Disk, square, triangle. The defeatured and simplified boundaries $\defbd$ and $\simpbd$ are indicated in blue and red, respectively.}
    \label{fig:boundary features}
\end{figure}
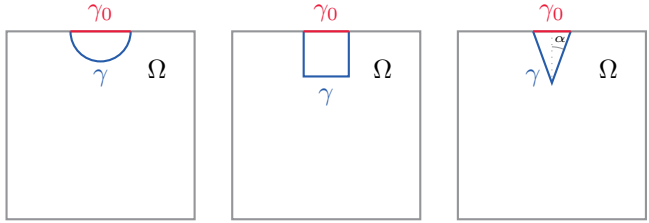

Next, we consider the dependence of the defeaturing error and the estimator on the opening angle $\alpha$ for the triangle feature; c.f. \cref{fig:boundary features}. Note that the geometry corresponds to a crack in the limit $\alpha \to 0$, and we expect a singular solution. We keep the feature size $|\defbd|$ constant while we vary the angle $\alpha$ between $1$ and $50\degree$. For the boundary data, we consider a smooth configuration with a global Dirichlet boundary condition,
\begin{align*}
    g_{\defbd}(\bx) = g_{\rmbd}(\bx) = -\frac{1}{2}(x_1^2 + x_2^2), && \bx = [x_1, x_2]^\top \in \defbd \cup \rmbd,
\end{align*}
to ensure that we do not introduce singularities at the connection between the outer and the feature boundary.
The source term is given by a sine wave in both coordinate directions, i.e.
\begin{align}
\label{eq:num:oscillatory source 2d}
    f(\bx) = A \sin(2 \pi k_1 x_1) \sin(2 \pi k_2 x_2), && \bx = [x_1, x_2]^\top \in \domain,
\end{align}
where $A = 12, k_1 = 2$, and $k_2 = 1$. The source term is extended to $\Tilde{f}$ by the same expression.

\cref{fig:num:dirdir:sing} demonstrates that the estimator remains reliable even for the smallest values of $\alpha$. In particular, the angle only weakly impacts the effectivity index, showcasing the estimator's robustness.

\begin{figure}
    \centering
        \begin{subfigure}[T]{0.45\textwidth}
        \includegraphics{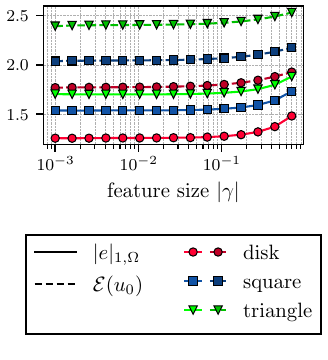}
    \caption{}
    \end{subfigure}
    \hfill
    \begin{subfigure}[T]{0.45\textwidth}
        \includegraphics{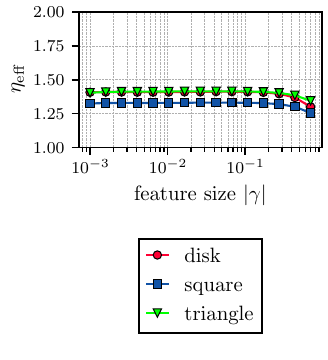}
    \caption{}
    \end{subfigure}

    \caption{Defeaturing errors and estimates (a) and estimator effectivities (b) as a function of the feature size for the feature shapes illustrated in \cref{fig:boundary features} for the Dirichlet-Dirichlet case in two-dimensions.}
    \label{fig:num:dirdir:shape}
\end{figure}

\begin{figure}
    \centering
        \begin{subfigure}[T]{0.45\textwidth}
        \includegraphics{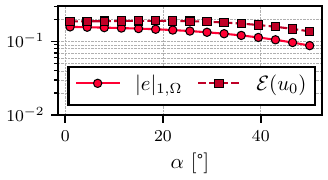}
    \caption{}
    \end{subfigure}
    \hfill
    \begin{subfigure}[T]{0.45\textwidth}
        \includegraphics{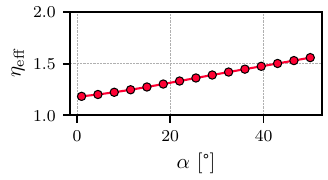}
    \caption{}
    \end{subfigure}

    \caption{Defeaturing errors and estimates (a) and estimator effectivities (b) as a function of the triangle feature's opening angle $\alpha$ shown in \cref{fig:internal features}.}
    \label{fig:num:dirdir:sing}
\end{figure}

\paragraph{Three-dimensional geometries}
 Consider the following setup: The defeatured domain is the unit cube $\defdomain = [-\tfrac 1 2, \tfrac 1 2]^3 \subset \R^3$ with negative features cut out from the top side, as shown in \cref{fig:boundary features 3d}. We impose a global Dirichlet boundary condition on all boundaries:
\begin{align*}
    g_{\defbd}(\bx) = g_{D}(\bx) = g_{\simpbd}(\bx) = -\frac{1}{2}(x_1^2 + x_2^2 + x_3^2), && \bx = [x_1, x_2, x_3]^\top \in \partial \domain.
\end{align*}
The source term is set to
\begin{align}
\label{eq:num:oscillatory source 3d}
f(\bx) = A \sin(2 \pi k_1 x_1) \sin(2 \pi k_2 x_2) \sin(2 \pi k_3 x_3), && \bx = [x_1, x_2, x_3]^\top \in \domain,
\end{align}
where $A = 1, k_1 = 1$, $k_2 = 2$, and $k_3 = 3$. The source term is extended to $\Tilde{f}$ by the same expression.

\Cref{fig:num:dirdir:shape3d} shows that the estimator is reliable and that the hidden constant is independent of the feature's size for small enough features. Moreover, the hidden constant depends on the feature shape only weakly, as in the two-dimensional experiments. In contrast, the defeaturing error vanishes in this experiment for $|\defbd| \to 0$ since choosing a globally continuous Dirichlet boundary condition implies that the feature merely introduces a regular perturbation of the PDE problem.

\begin{figure}
    \centering
        \begin{subfigure}[T]{0.45\textwidth}
        \includegraphics[scale=0.042]{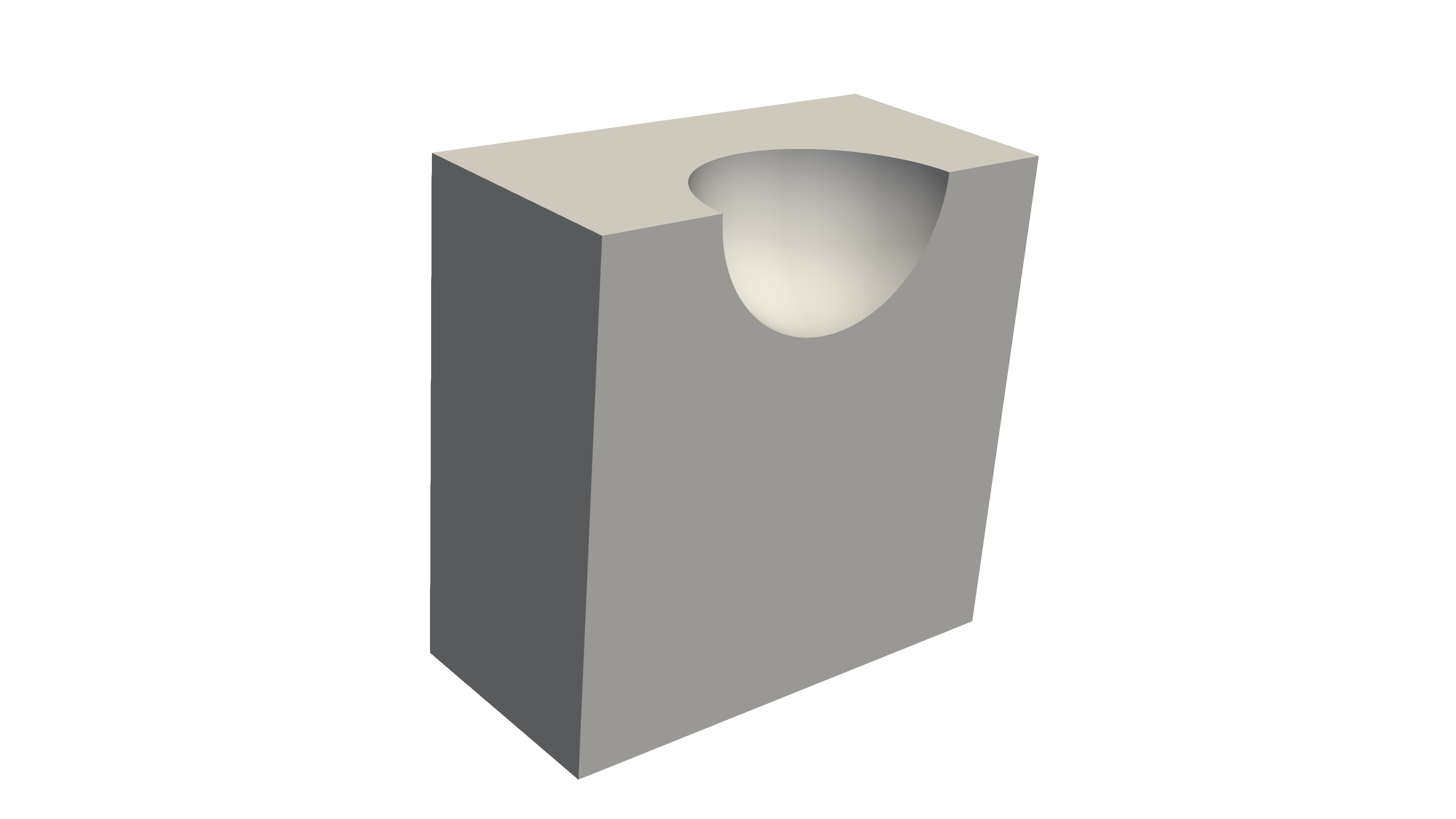}
    \caption{}
    \end{subfigure}
    \hfill
    \begin{subfigure}[T]{0.45\textwidth}
        \includegraphics[scale=0.042]{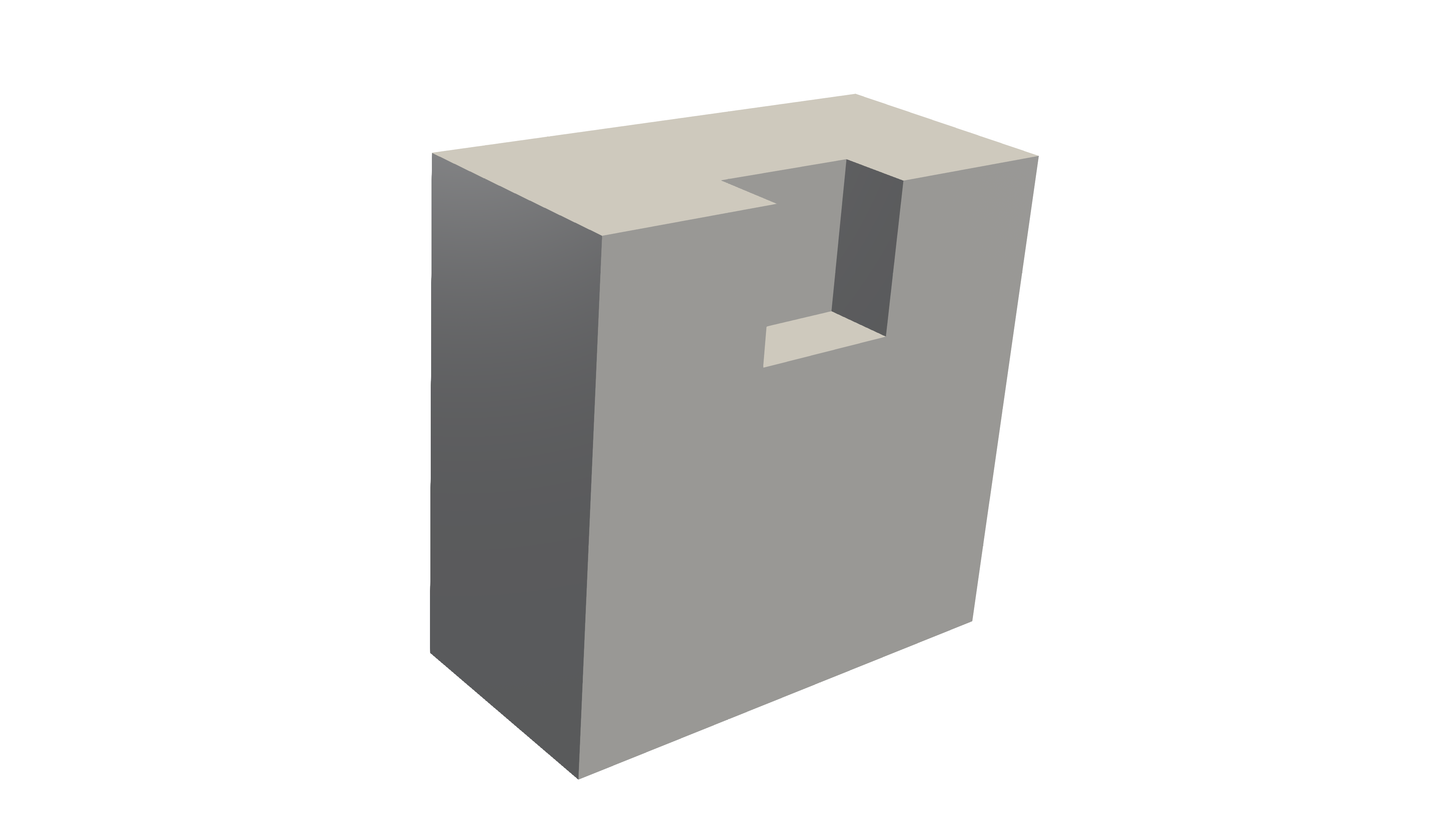}
    \caption{}
    \end{subfigure}

    \caption{Cut in the $y-z$ plane of the geometries with sphere (a) and cube boundary feature (b) considered in the experiments in three dimensions.}
    \label{fig:boundary features 3d}
\end{figure}

\begin{figure}
    \centering
        \begin{subfigure}[T]{0.45\textwidth}
        \includegraphics{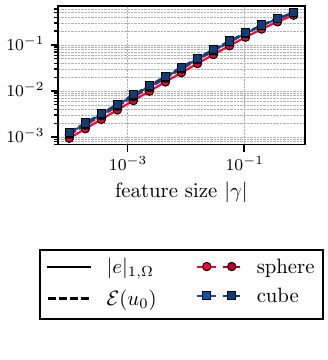}
    \caption{}
    \end{subfigure}
    \hfill
    \begin{subfigure}[T]{0.45\textwidth}
        \includegraphics{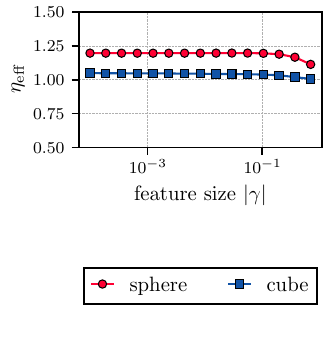}
    \caption{}
    \label{fig:num:dirdir:shape3d:eff}
    \end{subfigure}

    \caption{Defeaturing errors and estimates (a) and estimator effectivities (b) as a function of the feature size for the feature shapes illustrated in \cref{fig:boundary features 3d} for the Dirichlet-Dirichlet case in three dimensions.}
    \label{fig:num:dirdir:shape3d}
\end{figure}

\subsection{Dirichlet-Neumann features}
\label{subsec:num:dirichlet neumann features}
As for the case of Dirichlet-Dirichlet features, we validate the estimator proposed in \cref{eq:dirichlet neumann estimator} on different feature shapes and sizes.

\paragraph{Two-dimensional geometries}
We test the size and shape dependency of the defeaturing error and estimator in a similar way as in the Dirichlet-Dirichlet case: We consider again the boundary features illustrated in \cref{fig:boundary features}, with the opening angle $\alpha = 15\degree$ for the triangle. However, this time, we apply Neumann boundary conditions to the top side of the square denoted by $\rmbd_N$, which touches the feature. The remaining three sides form the outer Dirichlet boundary $\rmbd_D$. As problem data, we prescribe a constant source term $f \equiv 1$ in $\domain$ and homogeneous Dirichlet and Neumann boundary conditions on $\rmbd_D$ and $\rmbd_N$, respectively.
On the feature boundary $\defbd$, we prescribe the Dirichlet boundary condition
\begin{align*}
    g_{\defbd}(\theta) = \cos(\theta), & & \theta = \arctan\left(\frac{x_2 - \frac{1}{2}}{x_1}\right), & & \bx = [x_1, x_2]^\top \in \defbd.
\end{align*}
Finally, when solving the defeatured problem, we extend the homogeneous Neumann boundary conditions to the defeatured boundary $\simpbd$. This is a natural choice,
since otherwise in practical applications one would have to precisely mesh the intersection of the outer and feature boundaries, which may be as costly as meshing the feature in the first place.

The results are visualized in \cref{fig:num:dirneum:shape}. The estimator is reliable for all sizes and shapes. The hidden constant is independent of the feature size, at least for small enough features. Furthermore, there is only a weak dependence on feature's shape. Finally, we point out that the defeaturing error remains approximately constant independent of the size, similar to the Dirichlet-Dirichlet case.

\begin{figure}
    \centering
        \begin{subfigure}[T]{0.45\textwidth}
        \includegraphics{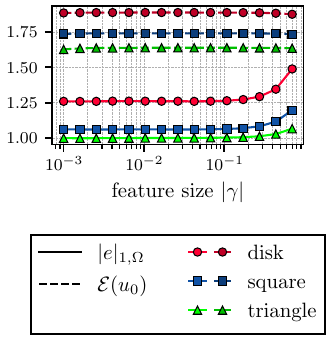}
    \caption{}
    \end{subfigure}
    \hfill
    \begin{subfigure}[T]{0.45\textwidth}
        \includegraphics{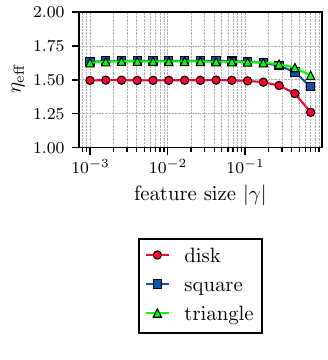}
    \caption{}
    \end{subfigure}

    \caption{Defeaturing errors and estimates (a) and estimator effectivities (b) as a function of the feature size for the feature shapes illustrated in \cref{fig:boundary features} for the Dirichlet-Neumann case.}
    \label{fig:num:dirneum:shape}
\end{figure}

\paragraph{Three-dimensional geometries}
We again consider the three-dimensional geometries in \cref{fig:boundary features 3d}. The Neumann boundary $\rmbd_N$ is the part of $\rmbd$ at a distance smaller than 0.3 from the intersection line $\overline{\defbd} \cap \overline{\rmbd}$. We impose the source term \cref{eq:num:oscillatory source 3d} from the Dirichlet-Dirichlet problem. In addition, we apply homogeneous Dirichlet and Neumann boundary conditions to $\rmbd_D, \rmbd_N$, and $\simpbd$, respectively, as for the two-dimensional example. The Dirichlet boundary conditions on $\defbd$ are set to
\begin{align*}
    g_\defbd(\bx) = x_1^2 + x_3^2, && \bx = [x_1, x_2]^\top \in \defbd.
\end{align*}
The results for different feature sizes are plotted in \cref{fig:num:dirneum:shape3d}. We observe that the estimator is reliable and that the hidden constant is independent of the feature size for small enough features. However, comparing the effectivities in \cref{fig:num:dirneum:shape3d:eff} to the ones in the Dirichlet-Dirichlet case in \cref{fig:num:dirdir:shape3d:eff}, we note that the initial range of feature sizes, where there is some higher-order shape dependency, is longer in the Dirichlet-Neumann case. The latter is due to more higher-order terms being ignored in the derivation of the estimator \cref{eq:dirichlet neumann estimator} due to the extension of $\bderr$ to $\defbd \cup \rmbd_N$; see \cref{thm:dirichlet neumann reliability,subsec:extensions}.

\begin{figure}
    \centering
        \begin{subfigure}[T]{0.45\textwidth}
        \includegraphics{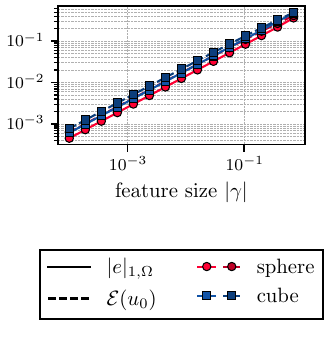}
    \caption{}
    \end{subfigure}
    \hfill
    \begin{subfigure}[T]{0.45\textwidth}
        \includegraphics{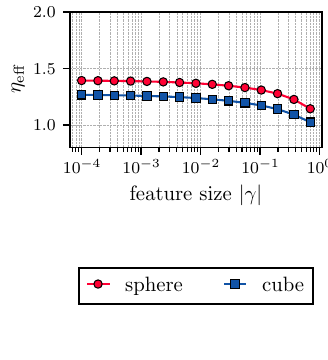}
    \caption{}
    \label{fig:num:dirneum:shape3d:eff}
    \end{subfigure}

    \caption{Defeaturing errors and estimates (a) and estimator effectivities (b) as a function of the feature size for the feature shapes illustrated in \cref{fig:boundary features 3d} for the Dirichlet-Neumann case.}
    \label{fig:num:dirneum:shape3d}
\end{figure}

\subsection{Internal features}
Let us finally investigate the performance of the estimator for internal features defined in \cref{eq:internal estimator}. As for the other feature types, we investigate the shape dependency of the hidden constant. Furthermore, we illustrate that the defeaturing estimator is agnostic to the choice of extension for the source term, while the latter substantially influences the size of the defeaturing error as a whole.

\subsubsection{Shape dependency}

\paragraph{Two-dimensional geometries}
We start with two-dimensional geometries consisting of a unit square with a negative feature at the center. \Cref{fig:internal features} illustrates the different feature types.

We apply the defeaturing estimator to the following setup: On the outer boundary $\rmbd$, consider homogeneous Dirichlet boundary conditions, i.e., $g_{D} \equiv 0$. On the feature boundary $\defbd$, we impose the Dirichlet data
\begin{align*}
    g_{\defbd}(\theta) = \cos(\theta) + 1, && \theta = \arctan(x_2/x_1), && \bx = [x_1, x_2]^\top \in \defbd.
\end{align*}
We point out that $g_{\defbd}$ is not the trace of a continuous function over $\domain$. Hence, the internal feature introduces a singular perturbation to the PDE problem, and therefore, we cannot expect the defeaturing error to vanish in the limit $|\defbd| \to 0$. Finally, we impose the constant source term $f \equiv 1$ over $\domain$, which we naturally extend to $\Tilde{f} \equiv 1$ on $\defdomain$.

The defeaturing errors for different feature sizes are plotted next to their corresponding defeaturing error estimates in \cref{fig:num:internal:shape:abs}. The defeaturing errors remain approximately constant for all feature types, as expected. Moreover, the estimator proves to be reliable in all cases. The estimator effectivity $\esteff$ is plotted in \cref{fig:num:internal:shape:eff} and is approximately constant and close to one for all feature types. We point out, however, that the chosen Dirichlet data on the features generate very singular solutions. These are difficult to resolve for the less regular features, square, star, and L-shape, and require extremely fine meshes manifesting themselves in the slightly slanted effectivity lines.

Furthermore, the effectivities for different feature types are very close, suggesting that the shape dependence is weak. Nevertheless, we note that all features are normalized to the same circumference. However, C-shape and L-shape result in lower errors and sharper estimates. This suggests that there might also be an important dependency on the feature's mass, which is currently not tracked by the estimator.

\begin{figure}
    \centering
    \begin{tikzpicture}
    \def\side{2.25} 
    \def\spacing{2.6} 
    \def\pival{3.14159}
    \def\nvertices{5} 
    \def\angle{360/\nvertices} 
    \def\outerr{0.5} 
    \def\innerr{0.25} 
    \def\CouterR{0.4} 
    \def\CinnerR{0.05} 
    \def\Lunit{0.07}

    \foreach \i in {0,1,2,3,4} {
        \draw[thick, gray] (\i*\spacing,0) rectangle ++(\side,\side);
        \node at (\i*\spacing + 0.8*\side, 0.8*\side) {$\domain$};
        \node[blue1] at (\i*\spacing + 0.5*\side, 0.17*\side) {$\defbd$};
    }
    
    \draw[thick, blue1] (0.5*\side,0.5*\side) circle (\side / 6);
    
    \draw[thick, blue1] (\spacing*1+0.5*\side - 0.25,0.5*\side - 0.25) rectangle (\spacing*1+0.5*\side + 0.25,0.5*\side + 0.25);
    
    \begin{scope}
        \foreach \i in {0,...,4} {
            \coordinate (S\i) at ($(\spacing*2+0.5*\side,0.5*\side) + (\i*\angle:\outerr)$);
            \coordinate (I\i) at ($(\spacing*2+0.5*\side,0.5*\side) + (\i*\angle + \angle/2:\innerr)$);
        }
        \draw[thick, blue1] (S0) -- (I0) \foreach \i in {1,...,4} { -- (S\i) -- (I\i) } -- cycle;
    \end{scope}
    
    \begin{scope}
        \draw[thick, blue1] (\spacing*3+0.5*\side,0.5*\side + \CouterR + 2 * \CinnerR) arc (90:270:\CouterR + 2 * \CinnerR);
        \draw[thick, blue1] (\spacing*3+0.5*\side,0.5*\side+\CouterR) arc (90:270:\CouterR);
        \draw[thick, blue1] (\spacing*3+0.5*\side,0.5*\side-\CouterR-2*\CinnerR) arc (90:270:-\CinnerR);
        \draw[thick, blue1] (\spacing*3+0.5*\side,0.5*\side+\CouterR) arc (90:270:-\CinnerR);
    \end{scope}
    
    \coordinate (L1) at ($(\spacing*4+0.5*\side,0.5*\side) + (4 * \Lunit, - 5 * \Lunit)$);
    \coordinate (L2) at ($(\spacing*4+0.5*\side,0.5*\side) + (4 * \Lunit, - 3 * \Lunit)$);
    \coordinate (L3) at ($(\spacing*4+0.5*\side,0.5*\side) + ( -\Lunit, - 3 * \Lunit)$);
    \coordinate (L4) at ($(\spacing*4+0.5*\side,0.5*\side) + ( -\Lunit,  5 * \Lunit)$);
    \coordinate (L5) at ($(\spacing*4+0.5*\side,0.5*\side) + ( -3 * \Lunit,  5 * \Lunit)$);
    \coordinate (L6) at ($(\spacing*4+0.5*\side,0.5*\side) + ( -3 * \Lunit,  -5 * \Lunit)$);

    \draw[thick, blue1] (L1) -- (L2) -- (L3) -- (L4) -- (L5) -- (L6) -- (L1);
\end{tikzpicture}
    \caption{Comparison of the internal feature shapes considered in the experiment (L-R): Disk, square, star, C-shape, L-shape. The defeatured boundary $\defbd$ is indicated in blue.}
    \label{fig:internal features}
\end{figure}
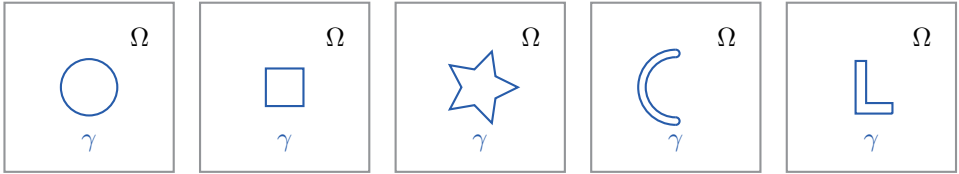

\begin{figure}
    \centering
        \begin{subfigure}[T]{0.45\textwidth}
        \includegraphics{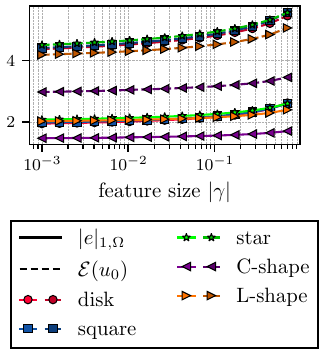}
    \caption{}
    \label{fig:num:internal:shape:abs}
    \end{subfigure}
    \hfill
    \begin{subfigure}[T]{0.45\textwidth}
        \includegraphics{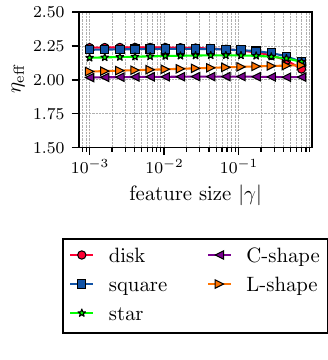}
    \caption{}
    \label{fig:num:internal:shape:eff}
    \end{subfigure}

    \caption{Defeaturing errors and estimates (a) and estimator effectivities (b) as a function of the feature size for the feature shapes illustrated in \cref{fig:internal features}.}
    \label{fig:num:internal:shape}
\end{figure}

\paragraph{Three-dimensional geometries}

\begin{figure}
    \centering
        \begin{subfigure}[T]{0.45\textwidth}
        \includegraphics[scale=0.042]{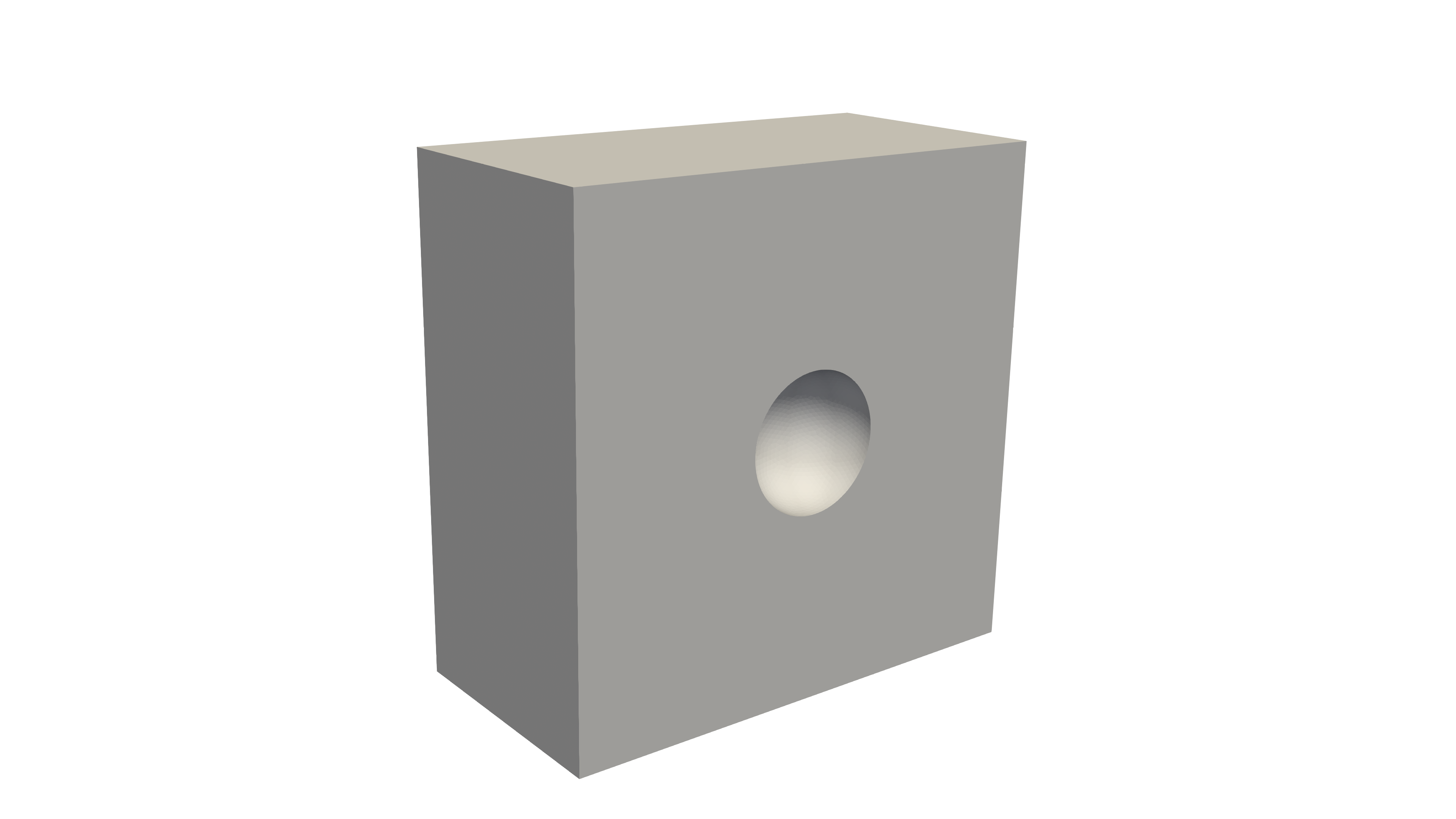}
    \caption{}
    \end{subfigure}
    \hfill
    \begin{subfigure}[T]{0.45\textwidth}
        \includegraphics[scale=0.042]{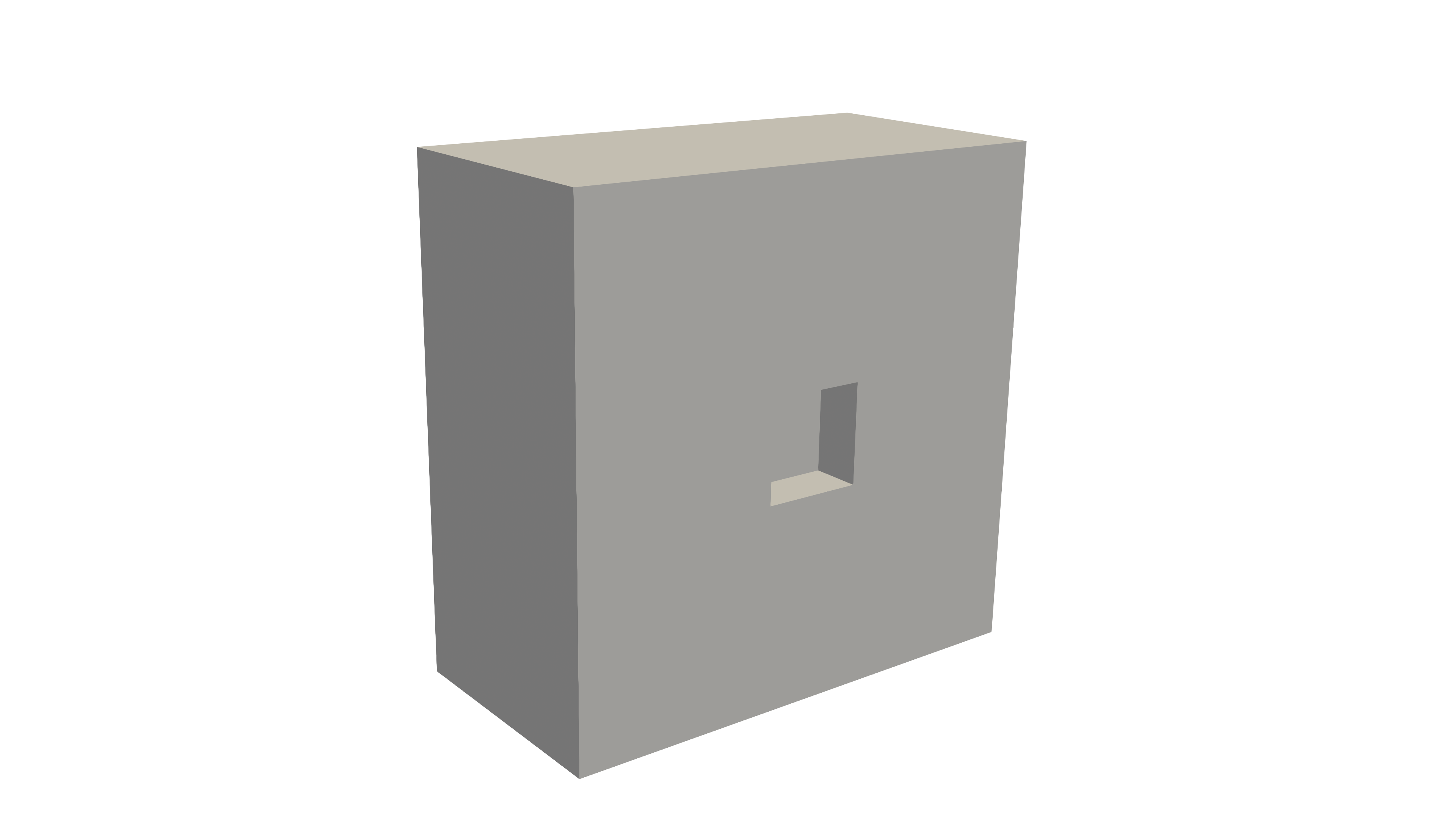}
    \caption{}
    \end{subfigure}

    \caption{Cut in the $y-z$ plane of the geometries with internal sphere (a) and cube feature (b) considered in the experiments in three dimensions.}
    \label{fig:internal features 3d}
\end{figure}

Consider the geometries displayed in \cref{fig:internal features 3d}. We impose once more the source term \cref{eq:num:oscillatory source 3d} and homogeneous Dirichlet boundary conditions on the outer boundary $\rmbd$. On the feature, we apply the boundary condition
\begin{align}
    g_\defbd(\bx) = \exp(\cos(x_1) \sin(x_2)), & & \bx = [x_1, x_2]^\top \in \defbd.
\end{align}
The defeaturing errors and effectivity indexes are shown in \cref{fig:num:internal:shape3d}. The estimator is reliable for both feature types across all sizes. The effectivity index converges to a constant value larger than one for both feature types. For the spherical feature, the effectivity index is only slightly larger than one, because the contribution by $\defestintavg{u_0}$ dominates. In fact, it can be shown that the average component $\defestintavg{u_0}$ of the internal estimator is sharp for spherical shells. Hence, this result is expected. 

\begin{figure}
    \centering
        \begin{subfigure}[T]{0.45\textwidth}
        \includegraphics{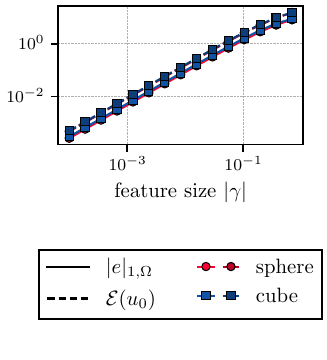}
    \caption{}
    \label{fig:num:internal:shape3d:abs}
    \end{subfigure}
    \hfill
    \begin{subfigure}[T]{0.45\textwidth}
        \includegraphics{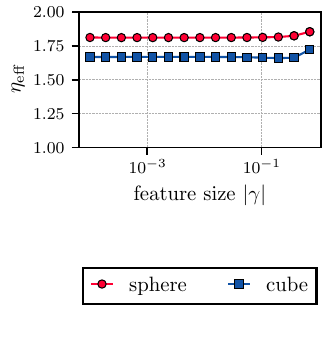}
    \caption{}
    \label{fig:num:internal:shape3d:eff}
    \end{subfigure}

    \caption{Defeaturing errors and estimates (a) and estimator effectivities (b) as a function of the feature size for the internal feature case in three dimensions}
    \label{fig:num:internal:shape3d}
\end{figure}

\subsubsection{Source term extensions}
In contrast to the boundary feature cases, it is well-known from singular perturbation theory that naively extending the source term into the feature by the same expression does not yield a good approximation of the exact solution regardless of the boundary condition \cite{mazya_asymptotic_2000}. Indeed, in the limit $|\genbd| \to 0$, the exact PDE \cref{eq:exact poisson problem} does not converge to its defeatured variant \cref{eq:internal defeatured problem}, but rather to a PDE on a perforated domain.

In the proposed defeaturing estimator this phenomenon is reflected by the coefficient $\Bar{c}_\defbd$, which vanishes only very slowly for $|\defbd| \to 0$. Indeed, even in a domain with a smooth boundary $\partial \domain$ and smooth boundary conditions, the average $\avg{\bderr}{\defbd}$ of the boundary error may still be non-negligible. Hence, $\defestintavg{u_0} = \Bar{c}_\defbd |\avg{\bderr}{\defbd}|$ only converges to zero very slowly, while the non-average part $\defestintnavg{u_0}$ vanishes quickly.

However, by adding an appropriate correction term to the right-hand side of the defeatured problem, one obtains a much better first-order approximation.
Let $\hat{G}(x) = \frac{1}{2\pi}\log(|x - m_\feat|)$ denote the fundamental solution of the Laplacian centered at $m_\feat$ and consider $G(x) = \hat{G}(x) - g(x)$, where $g$ is the solution to
\begin{align}
\label{eq:num:green}
    \begin{cases}
        -\Delta g = 0,  \text{ in } \defdomain,\\
        g(x) = -\hat{G}(x)  \text{ on } \rmbd_D, \\
        \partial_n g(x) = -\partial_n\hat{G}(x)  \text{ on } \rmbd_N. \\
    \end{cases}
\end{align}
If the feature $\feat$ is a disk centered at $m_\feat$, the first-order approximation is given by
\begin{align}
\label{eq:internal first order approximation}
    u_1(x) \coloneqq u_0(x) + \mu(\defbd) \avg{\bderr}{\defbd} G(x), & & \mu(\defbd) \coloneqq \frac{2\pi}{\log(\diam\defbd/2) - 2\pi \avg{g}{\defbd}},
\end{align}
while for more general shapes, $\mu(\defbd)$ additionally depends on the logarithmic capacity of the feature \cite{mazya_asymptotic_2000, ransford_computation_2011}.

In the context of the estimator, note that $-\Delta u_1 = f$ still holds in $\domain$, while there is a point source at the barycenter $m_\feat$ of the feature. Consequently, the reliability in \cref{thm:internal estimate reliability} still holds.
Furthermore, we have in the case of a disk feature that
\begin{align*}
    \avg{(u - u_1)}{\defbd} = (1 - \mu(\defbd) \avg{G}{\defbd}) \avg{\bderr}{\defbd} = 0,
\end{align*}
while for more general shapes we have $\mu(\defbd) \avg{G}{\defbd} \to 1$, for $|\defbd| \to 0$. Therefore, the first-order approximation corresponds to adding a point source at the barycenter to remove the average of the boundary error.

Let us illustrate this with the following experiment: Consider again the square domain with a disk cut from the center in \cref{fig:internal features}. We impose the Dirichlet data $g_D \equiv 1$ and $g_\defbd(\bx) = x_1^2 + x_2$ on $\rmbd$ and $\defbd$, respectively. The source term is given by \cref{eq:num:oscillatory source 2d}. Now we compute the naively defeatured solution $u_0$ and the corrected solution $u_1$, where for the latter we must additionally solve equation \cref{eq:num:green}.

\begin{figure}
    \centering
        \begin{subfigure}[T]{0.45\textwidth}
        \includegraphics{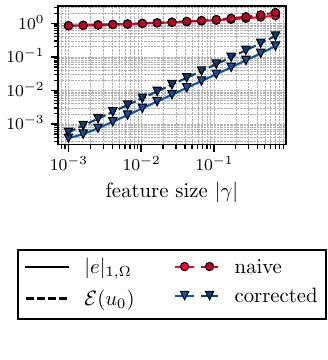}
    \caption{}
    \label{fig:num:internal:pert:abs}
    \end{subfigure}
    \hfill
    \begin{subfigure}[T]{0.45\textwidth}
        \includegraphics{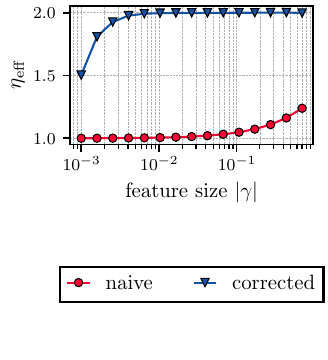}
    \caption{}
    \label{fig:num:internal:pert:eff}
    \end{subfigure}

    \caption{Defeaturing errors and estimates (a) and estimator effectivities (b) as a function of the feature size for naive and corrected source term extensions. The drop of $\eta_{\mathrm{eff}}$ at the lower end for the corrected solution is due to numerical interpolation issues of the fundamental solution rather than the properties of the estimator.}
    \label{fig:num:internal:pert}
\end{figure}

The results are shown in \cref{fig:num:internal:pert}. We see that the estimator remains reliable for the corrected solution, while the corresponding defeaturing error vanishes quickly, in contrast to the naively defeatured solution. This shows that the estimator is indeed agnostic to the extension of the source term, provided that the defeatured solution satisfies the original PDE in the exact domain. However, from a computational point of view, additionally solving for $g$ for each feature is clearly not efficient. Furthermore, interpolating the fundamental solution $\hat{G}$ requires a heavily refined mesh around small features, which might be as costly as meshing the feature in the first place. The latter is reflected by the drop of the effectivity index for the corrected solution in \cref{fig:num:internal:pert:eff}.

\section{Conclusion}
\label{sec:conclusion and outlook}
In this work, we have extended the framework for a posteriori defeaturing error estimation in \cite{buffa_analysis-aware_2022} to negative Dirichlet features in the interior and on the boundary. The two crucial elements were the scaling behavior of the Neumann trace operator norm with respect to the defeatured boundary's size and the continuous extension on Lipschitz boundaries. The estimators were successfully validated numerically in all three situations of \cref{problem:defeatured problem} in two and three dimensions, demonstrating that they accurately capture the error's dependence on the feature geometry and the underlying differential problem. Notably, the estimators are explicit in the feature size, independent of the specific extension chosen for the source term, and computationally inexpensive as they only require boundary integrals over the defeatured boundary.

While this paper paper focuses on the analysis in continuous spaces for a single feature, the framework is readily extensible.
Since the estimator is merely additive, extending the estimators to multiple features is straightforward, as in the Neumann case \cite{antolin_analysisaware_2024}. The discretization error can be taken into account via an a posteriori error estimator simultaneously; see \cite{chanon_adaptive_2022} and \cite{buffa_equilibrated_2024}. Positive features can be handled by solving an extended problem in the bounding box of the positive feature as proposed in \cite{buffa_analysis-aware_2022, chanon_adaptive_2022}. Finally, we only consider the global energy norm of the error in this paper. Deriving goal-oriented defeaturing error estimates will be an important step in applying the method in industrial settings. The development of such goal-oriented error estimates is the subject our future work.

\appendix
\section{Some results on Sobolev trace spaces}
\label{sec:results on trace spaces}
In this section, we state several technical results that are used in the proofs of \cref{sec:estimates}.

First, we recall the following scaling properties of the fractional Sobolev norm defined in \cref{eq:definition of fractional sobolev norm}. Let $\genbd$ be an $(n-1)$-dimensional Lipschitz submanifold of $\R^n$ and $\mu \in \hs{\genbd}$ with $s \in (0, 1)$. For $\lambda > 0$ we denote by $\genbd_\lambda$ the submanifold obtained from rescaling $\R^n$ by the factor $\lambda$. Similarly, we write $x_\lambda = \lambda x$ and denote by $\mu_\lambda$ the push-forward of $\mu$ defined by 
\begin{align*}
  \mu_\lambda(x_\lambda) = \mu_\lambda(\lambda x) \coloneqq \mu(x), \quad x \in \genbd.
\end{align*}
Then, using the usual change of variable formula, we find
\begin{align}
\label{eq:ltwo norm scaling}
    \Ltwonorm{\mu_\lambda}{\genbd_\lambda}^2 = \int_{\genbd_\lambda} |\mu_\lambda(x_\lambda)|^2 \dd{s(x_\lambda)} = \lambda^{n-1} \int_\genbd |\mu(x)|^2 \dd{s(x)} = \lambda^{n-1}\Ltwonorm{\mu}{\genbd}^2.
\end{align}
Similarly, we find
\begin{nalign}
\label{eq:hs seminorm scaling}
    \seminorm{\mu_\lambda}{s}{\genbd_\lambda}^2 &= \int_{\genbd_\lambda} \int_{\genbd_\lambda} \frac{|\mu_\lambda(x_\lambda) - \mu_\lambda(y_\lambda)|^2}{|x_\lambda - y_\lambda|^{n - 1 + 2s}} \dd{s(x_\lambda)}\dd{s(y_\lambda)}\\
    &= \frac{\lambda^{2(n - 1)}}{\lambda^{n - 1 + 2s}} \int_\genbd \int_\genbd \frac{|\mu(x) - \mu(y)|^2}{|x - y|^{n - 1 + 2s}} \dd{s(x)}\dd{s(y)} = \lambda^{n - 1 - 2s} \seminorm{\mu}{s}{\genbd}^2.
\end{nalign}
These observations allow us to formulate the following statement:

\begin{lemma}
\label{lemma:scaling of sobolev norms}
    Let $\genbd$ be an $(n-1)$-dimensional Lipschitz submanifold of $\R^n$ and $\mu \in \hs{\genbd}$ with $s \in (0, 1)$. Denote by $\hat{\genbd}$ the same submanifold rescaled to unit size such that $|\hat{\genbd}| = 1$ and by $\hat{\mu}$ the push-forward of $\mu$ to $\hs{\hat{\genbd}}$. Then, we have
    \begin{align*}
        \Ltwonorm{\mu}{\genbd} = |\genbd|^{-\frac{1}{2}} \Ltwonorm{\hat{\mu}}{\hat{\genbd}} && \text{ and } && \seminorm{\mu}{s}{\genbd} = |\genbd|^{\frac{1 + 2s - n}{2(n-1)}} \seminorm{\hat{\mu}}{s}{\hat{\genbd}}.
    \end{align*}
\end{lemma}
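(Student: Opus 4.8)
The plan is to obtain the statement as an immediate consequence of the scaling identities \cref{eq:ltwo norm scaling} and \cref{eq:hs seminorm scaling} established just above, specialised to the single dilation that normalises the surface measure of $\genbd$; no further tools are needed, and the whole content is an exponent count.

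First I would fix the dilation factor $\lambda > 0$ for which the rescaled manifold $\genbd_\lambda$ of the preceding discussion coincides with $\hat{\genbd}$. Since a dilation of $\R^n$ by $\lambda$ multiplies the $(n-1)$-dimensional Hausdorff measure of $\genbd$ by $\lambda^{n-1}$, the normalisation $|\genbd_\lambda| = |\hat{\genbd}| = 1$ forces $\lambda = |\genbd|^{-\frac{1}{n-1}}$; with this value $\hat{\genbd} = \genbd_\lambda$ and $\hat{\mu}$ is exactly the push-forward $\mu_\lambda$ of the text preceding the lemma. It then only remains to insert this $\lambda$ into \cref{eq:ltwo norm scaling}, which ties $\Ltwonorm{\hat{\mu}}{\hat{\genbd}}^2$ to $\Ltwonorm{\mu}{\genbd}^2$ through the factor $\lambda^{n-1} = |\genbd|^{-1}$, and into \cref{eq:hs seminorm scaling}, which ties $\seminorm{\hat{\mu}}{s}{\hat{\genbd}}^2$ to $\seminorm{\mu}{s}{\genbd}^2$ through $\lambda^{n-1-2s} = |\genbd|^{-\frac{n-1-2s}{n-1}}$. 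Solving each relation for the $\genbd$-side norm of $\mu$ and taking square roots yields the two asserted identities; the assumption $s \in (0,1)$ is used only so that $\lfloor s \rfloor = 0$ and hence $\seminorm{\cdot}{s}{\cdot}$ in \cref{eq:definition of fractional sobolev norm} is the Gagliardo seminorm to which \cref{eq:hs seminorm scaling} applies.

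I do not anticipate a genuine obstacle. The single point that must be handled carefully is the power counting inside the seminorm: the two surface measures $\dd{s(x_\lambda)}\,\dd{s(y_\lambda)}$ each contribute a factor $\lambda^{n-1}$, while the kernel, by the homogeneity $|x_\lambda - y_\lambda| = \lambda|x - y|$, contributes the compensating factor $\lambda^{-(n-1+2s)}$, leaving the net power $\lambda^{n-1-2s}$. The change of variables $x \mapsto \lambda x$ underpinning both identities is legitimate because it restricts to a bi-Lipschitz similarity of $\genbd$ onto $\genbd_\lambda$ — a map that rescales the covering radius $\epsilon$ of \cref{def:uniformly lipschitz cts boundary} by $\lambda$ while leaving the Lipschitz bound and the overlap multiplicity unchanged — and it is precisely the manipulation already carried out to establish \cref{eq:ltwo norm scaling} and \cref{eq:hs seminorm scaling}.
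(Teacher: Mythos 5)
Your proposal is correct and is essentially identical to the paper's own proof, which likewise just fixes $\lambda=|\genbd|^{-\frac{1}{n-1}}$ and substitutes it into \cref{eq:ltwo norm scaling} and \cref{eq:hs seminorm scaling}. One caveat applying equally to both arguments: solving those relations for the $\genbd$-side norms actually gives $\Ltwonorm{\mu}{\genbd}=|\genbd|^{\frac{1}{2}}\Ltwonorm{\hat{\mu}}{\hat{\genbd}}$ and $\seminorm{\mu}{s}{\genbd}=|\genbd|^{\frac{n-1-2s}{2(n-1)}}\seminorm{\hat{\mu}}{s}{\hat{\genbd}}$ (e.g.\ for constant $\mu$ the $L^2$ norm must grow with $|\genbd|$), so the exponents displayed in the lemma have their signs flipped; this is a slip in the statement rather than in your argument, and it is harmless downstream since the inequalities it feeds are homogeneous.
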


\begin{proof}
    We obtain $\hat{\genbd}$ by rescaling $\R^n$ by the factor $|\genbd|^{\frac{-1}{n-1}}$. Replacing $\lambda$ by $|\genbd|^{\frac{-1}{n-1}}$ in \cref{eq:ltwo norm scaling} and \cref{eq:hs seminorm scaling} yields the desired result.
\end{proof}

\subsection{Poincaré and interpolation inequalities}
\label{subsec:poincare and interpolation}
In this section, we state useful inequalities in trace spaces for reference with explicit dependence of any constant on the size of the underlying piece of the boundary. The following Poincaré-type inequalities stem from \cite{chanon_adaptive_2022}, while the interpolation-type inequalities in \cref{eq:interpolation and poincare inequality} are partially new and require a proof.

\begin{lemma}[Lemma 2.3.6. in \cite{chanon_adaptive_2022}]
\label{lemma:poincare I}
Assume that $\genbd \subset \partial \domain$  for some Lipschitz domain $\domain \subset \R^n$. Assume that $\genbd$ is isotropic according to \cref{def:isotropic subset} and connected, and $\partial \genbd \neq \emptyset$. Then, for all $\mu \in \htracedbz{\genbd}$,
\begin{align*}
    \Ltwonorm{\mu}{\genbd} \lesssim |\genbd|^{\frac{1}{2(n-1)}} \htraceseminorm{\mu^\star}{\partial \domain} \leq |\genbd|^{\frac{1}{2(n-1)}} \htracedbznorm{\mu}{\genbd}.
\end{align*}
\end{lemma}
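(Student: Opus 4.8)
The plan is to reduce to the unit-size case by scaling, and then to extract $\Ltwonorm{\mu}{\genbd}$ from the single piece of the Gagliardo seminorm of $\mu^\star$ that sees the jump of $\mu^\star$ across the (nonempty) $(n-2)$-dimensional interface $\partial\genbd$.

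The second inequality is immediate, since $\htraceseminorm{\mu^\star}{\partial\domain} = \htracedbzseminorm{\mu}{\genbd} \leq \htracedbznorm{\mu}{\genbd}$. For the first, I rescale $\domain$ so that the image $\hat{\genbd}$ of $\genbd$ has unit $(n-1)$-measure and write $\hat{\mu}$ for the push-forward of $\mu$. Combining \cref{lemma:scaling of sobolev norms} (for $\mu$ on $\genbd$) with the scaling identity \cref{eq:hs seminorm scaling} at $s=\tfrac12$ (for $\mu^\star$ on $\partial\domain$), and recalling from the proof of \cref{prop:norm equivalence for htracedbz} that this rescaling leaves the Lipschitz bound and overlap number of a covering of $\partial\domain$ unchanged while making its covering radius comparable to $1$, the asserted estimate becomes equivalent to the size-free statement
\begin{align*}
  \Ltwonorm{\hat{\mu}}{\hat{\genbd}} \lesssim \htraceseminorm{\hat{\mu}^\star}{\partial\hat{\domain}},
\end{align*}
with a constant depending only on the (now $\simeq 1$) Lipschitz character of $\hat{\domain}$ and on the isotropy constants of \cref{def:isotropic subset}.

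For this, since $\hat{\mu}^\star \equiv 0$ on $\partial\hat{\domain}\setminus\hat{\genbd}$, I discard the diagonal part of the double integral and keep only the two symmetric cross terms:
\begin{align*}
  \htraceseminorm{\hat{\mu}^\star}{\partial\hat{\domain}}^2
  \;\geq\; 2\int_{\hat{\genbd}} |\hat{\mu}(x)|^2\,K(x)\,\dd{s}(x),
  \qquad K(x) \coloneqq \int_{\partial\hat{\domain}\setminus\hat{\genbd}} \frac{\dd{s}(y)}{|x-y|^{n}}.
\end{align*}
The proof is then finished by the pointwise lower bound $K(x)\gtrsim 1$, uniformly in $x\in\hat{\genbd}$, followed by undoing the scaling.

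The bound $K(x)\gtrsim 1$ is the main obstacle, and it is where all three hypotheses enter. Connectedness and isotropy of $\genbd$ give, via \cref{def:isotropic subset}, $\diam{\hat{\genbd}}\simeq 1$, so every $x\in\hat{\genbd}$ lies within distance $\simeq 1$ of $\partial\hat{\genbd}$. Since $\partial\genbd\neq\emptyset$ and $\domain$, $\genbd$ are Lipschitz, $\partial\hat{\genbd}$ is a nonempty Lipschitz $(n-2)$-submanifold of $\partial\hat{\domain}$ with $\mathcal{H}^{n-2}(\partial\hat{\genbd})\gtrsim 1$ (a relative isoperimetric estimate for the unit-measure set $\hat{\genbd}$), and one can attach to it a one-sided collar $P\subset\partial\hat{\domain}\setminus\hat{\genbd}$ of width comparable to the rescaled covering radius with $\mathcal{H}^{n-1}(P)\gtrsim 1$; moreover $P$ lies in a ball of radius $R\simeq 1$ around every $x\in\hat{\genbd}$, so that restricting the integral defining $K(x)$ to $P$ yields $K(x)\geq \mathcal{H}^{n-1}(P)\,R^{-n}\gtrsim 1$. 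The delicate point is producing a collar of comparable mass on the complementary side: morally this is the content of $\partial\genbd\neq\emptyset$ together with \cref{rem:lipschitz bound and covering radius}, since the covering radius already controls how the two pieces of $\partial\domain$ sit relative to one another. An alternative route to the size-free estimate is a compactness/contradiction argument — a normalized minimizing sequence converges strongly in $L^2$ to a function of vanishing seminorm, hence to a constant, which must vanish because $\partial\hat{\domain}\setminus\hat{\genbd}$ has positive measure — but obtaining a constant uniform over all admissible shapes is less transparent in that approach, whereas the collar estimate gives it directly.
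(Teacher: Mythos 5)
The paper itself does not prove this lemma --- it is imported verbatim from \cite{chanon_adaptive_2022} (Lemma 2.3.6) --- so there is no in-paper argument to compare against; I can only assess your proposal on its own terms. Structurally it is the standard and, I believe, correct route to this kind of fractional Friedrichs inequality: the second inequality is definitional, the scaling reduction is right (the exponents $\tfrac{1}{2(n-1)}+\tfrac{n-2}{2(n-1)}=\tfrac12$ do match the $L^2$ scaling from \cref{lemma:scaling of sobolev norms}), and discarding the diagonal block of the Gagliardo double integral of $\mu^\star$ to retain only the cross term $2\int_{\hat\genbd}|\hat\mu(x)|^2K(x)\dd{s}(x)$ is precisely the mechanism by which the zero extension yields a Poincar\'e inequality without subtracting averages.

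The step that does not follow from the stated hypotheses is the uniform bound $K(x)\gtrsim 1$. Isotropy, connectedness and $\partial\genbd\neq\emptyset$ constrain only $\genbd$; they say nothing about how much $(n-1)$-measure $\partial\domain\setminus\genbd$ carries within distance $|\genbd|^{1/(n-1)}$ of $\genbd$, which is exactly what your collar $P$ requires. This is not a cosmetic issue: take $\partial\domain$ a sphere and $\genbd$ the sphere minus a small cap of radius $\delta$. All three hypotheses hold, yet the constraint imposed by the zero extension degenerates with the $H^{1/2}$-capacity of the cap, and the constant in the inequality blows up as $\delta\to0$ (like a power of $\delta^{-1}$ for $n=3$, logarithmically for $n=2$). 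So the ``fat complement'' property cannot be extracted from $\partial\genbd\neq\emptyset$ plus \cref{rem:lipschitz bound and covering radius} alone; it is an additional geometric assumption, the same one the paper itself smuggles in through the scaling relation $\epsilon\simeq|\genbd|^{\frac{1}{n-1}}$ in the proof of \cref{prop:norm equivalence for htracedbz}, and it is automatic in the defeaturing application where $\genbd=\defbd$ is a small feature boundary. Your argument is acceptable once you state this hypothesis explicitly rather than attributing it to $\partial\genbd\neq\emptyset$; note also that your compactness alternative needs the very same input (positive measure of the complement in the component of $\partial\domain$ containing $\genbd$) in addition to the lack of uniformity you already flag.
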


We underline the necessity of the assumption $\partial \genbd \neq \emptyset$. Indeed, if $\genbd$ is a closed curve, the space $\htracedbz{\genbd}$ contains non-trivial constants, and therefore, the inequality in \cref{lemma:poincare I} does not hold. Nevertheless, the following Poincaré-type inequality holds:

\begin{lemma}[Lemma 2.3.8. in \cite{chanon_adaptive_2022}]
\label{lemma:poincare II}
    Assume that $\genbd$ is isotropic according to \cref{def:isotropic subset}. Then, for all $\mu \in \htrace{\genbd}$,
    \begin{align*}
        \Ltwonorm{\mu - \avg{\mu}{\genbd}}{\genbd} \lesssim |\genbd|^{\frac{1}{2(n-1)}} \htraceseminorm{\mu}{\genbd}.
    \end{align*}
\end{lemma}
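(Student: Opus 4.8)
The plan is to bypass any scaling, covering or compactness argument and instead read the estimate straight off the Gagliardo seminorm, so that the only structural input is \cref{def:isotropic subset}. Since $\mu \in \htrace{\genbd} \subset \Ltwo{\genbd}$, the starting point is the elementary variance identity
\begin{align*}
    \Ltwonorm{\mu - \avg{\mu}{\genbd}}{\genbd}^2 = \frac{1}{2|\genbd|}\int_\genbd \int_\genbd |\mu(x) - \mu(y)|^2 \,\dd{s}(x)\,\dd{s}(y),
\end{align*}
obtained by expanding the square and using $\int_\genbd \mu = |\genbd| \avg{\mu}{\genbd}$; the double integral is finite because $\mu \in \Ltwo{\genbd}$.

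Next I would reinsert the Gagliardo kernel by hand. Since $\genbd$ is $(n-1)$-dimensional, the kernel of the $H^{1/2}$-seminorm $\htraceseminorm{\cdot}{\genbd}$ is $|x-y|^{-n}$, consistently with the scaling relation \cref{eq:hs seminorm scaling}, and the Euclidean distance of two points of $\genbd$ is at most their geodesic distance, so $|x-y| \leq \diam{\genbd}$ on $\genbd \times \genbd$. Hence
\begin{align*}
    |\mu(x) - \mu(y)|^2 = \frac{|\mu(x) - \mu(y)|^2}{|x-y|^n}\,|x-y|^n \leq (\diam{\genbd})^n\,\frac{|\mu(x) - \mu(y)|^2}{|x-y|^n},
\end{align*}
and integrating turns the variance identity into $\Ltwonorm{\mu - \avg{\mu}{\genbd}}{\genbd}^2 \leq \tfrac{(\diam{\genbd})^n}{2|\genbd|}\,\htraceseminorm{\mu}{\genbd}^2$. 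It then only remains to show $(\diam{\genbd})^n/|\genbd| \lesssim |\genbd|^{\frac{1}{n-1}}$.

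This last estimate is exactly what \cref{def:isotropic subset} delivers: letting $\genbd_{\max}$ be the connected component of $\genbd$ of maximal diameter, the per-component bound gives $\diam{\genbd_{\max}}^{n-1} \lesssim |\genbd_{\max}| \leq |\genbd|$, while the first part of the isotropy condition gives $\diam{\genbd} \lesssim \diam{\genbd_{\max}}$; combining, $\diam{\genbd}^{n-1} \lesssim |\genbd|$ and $\diam{\genbd} \lesssim |\genbd_{\max}|^{\frac{1}{n-1}} \leq |\genbd|^{\frac{1}{n-1}}$, whence $(\diam{\genbd})^n/|\genbd| = \diam{\genbd}\cdot\diam{\genbd}^{n-1}/|\genbd| \lesssim |\genbd|^{\frac{1}{n-1}}$. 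Substituting and taking square roots concludes the argument. There is no genuine obstacle here; the only point to watch is the dimension bookkeeping — after cancellation exactly one surplus factor $\diam{\genbd}$ survives, which isotropy identifies with $|\genbd|^{1/(n-1)}$ — and it is worth stressing that neither connectedness of $\genbd$ nor any Lipschitz regularity beyond what \cref{def:isotropic subset} already encodes enters the proof.
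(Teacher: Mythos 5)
Your argument is correct, and it is genuinely different from what the paper does: the paper does not prove this lemma at all but imports it from \cite{chanon_adaptive_2022}, where (as with \cref{lemma:poincare I} and the interpolation inequalities in \cref{subsec:poincare and interpolation}) the natural route is a rescaling to a reference manifold of unit size combined with a standard Poincar\'e inequality there, the isotropy assumption controlling the constants under rescaling. Your proof short-circuits all of that: the variance identity
$\Ltwonorm{\mu - \avg{\mu}{\genbd}}{\genbd}^2 = \tfrac{1}{2|\genbd|}\int_\genbd\int_\genbd|\mu(x)-\mu(y)|^2\,\dd{s}(x)\dd{s}(y)$
is exact, the pointwise bound $|x-y|^n\le(\diam{\genbd})^n$ reinstates the Gagliardo kernel, and \cref{def:isotropic subset} (via $\diam{\genbd}^{\,n-1}\lesssim|\genbd|$, which the definition supplies even for disconnected $\genbd$) converts $(\diam{\genbd})^n/|\genbd|$ into $|\genbd|^{1/(n-1)}$. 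This buys you an explicit constant depending only on the isotropy constants, no compactness or reference-domain Poincar\'e constant, and validity for disconnected $\genbd$ without extra work, since the double integral automatically contains the cross-component terms. The one point you handle correctly but should keep flagged is the kernel exponent: the paper's display \cref{eq:definition of fractional sobolev norm} writes $|x-y|^{n+2\theta}$, whereas for an $(n-1)$-dimensional $\genbd$ the exponent consistent with the scaling identity \cref{eq:hs seminorm scaling} is $n-1+2s=n$ at $s=\tfrac12$; your computation uses the latter, which is the intended convention, and the stated power $|\genbd|^{\frac{1}{2(n-1)}}$ indeed only comes out with that choice.
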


The following result allows us to express the defeaturing error estimates in terms of easily computable quantities:
\begin{lemma}
\label{eq:interpolation and poincare inequality}
    Assume that $\genbd$ is isotropic according to \cref{def:isotropic subset}. Then, for all $\mu \in \htrace{\genbd}$,
    \begin{align}
    \label{eq:interpolation I}
        \htraceseminorm{\mu - \avg{\mu}{\genbd}}{\genbd} \lesssim \sqrt{\Ltwonorm{\mu - \avg{\mu}{\genbd}}{\genbd} \Ltwonorm{\nabla_t \mu}{\genbd}},
    \end{align}
    where $\nabla_t \mu$ denotes the gradient in tangential direction. 
    Moreover, if $\mu \in \htracedbz{\genbd}$, we have
    \begin{align}
    \label{eq:interpolation II}
        \htracedbzseminorm{\mu}{\genbd} \lesssim \sqrt{\Ltwonorm{\mu}{\genbd} \Ltwonorm{\nabla_t\mu}{\genbd}}.
    \end{align}
\end{lemma}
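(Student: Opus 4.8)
The plan is to reduce both inequalities to a single scale-invariant interpolation estimate on a reference configuration of unit size. Since $n-1+2\cdot\tfrac12=n$, the Gagliardo seminorm $\htraceseminorm{\cdot}{\genbd}$, the $L^2$-norm and the tangential-gradient norm each carry a homogeneous (and distinct) power of the length scale, and \cref{lemma:scaling of sobolev norms} (used with $s=\tfrac12$ and $s=1$) shows that, after rescaling $\genbd$ to unit size, both sides of \cref{eq:interpolation I} and \cref{eq:interpolation II} pick up the same factor $|\genbd|^{\frac{n-2}{2(n-1)}}$; hence it suffices to prove the estimates on a fixed manifold, with a constant allowed to depend on the shape but not the size of $\genbd$. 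The most transparent way to obtain this scale-invariant estimate, and to keep the size-dependence manifestly correct, is to argue directly: for $w\in H^1(\mathcal{M})$ on a compact Lipschitz $(n-1)$-manifold $\mathcal{M}$, write $\htraceseminorm{w}{\mathcal{M}}^2=\int_{\mathcal{M}}\int_{\mathcal{M}}|w(x)-w(y)|^2|x-y|^{-n}\dd{s}(x)\dd{s}(y)$, split the integration at a radius $\delta\in(0,\diam\mathcal{M}]$, bound the near part $\{|x-y|<\delta\}$ by $\lesssim\delta\Ltwonorm{\nabla_t w}{\mathcal{M}}^2$ (join $x$ to $y$ by a path in $\mathcal{M}$ of length $\lesssim|x-y|$ and apply Cauchy--Schwarz along it) and the far part $\{|x-y|\ge\delta\}$ by $\lesssim\delta^{-1}\Ltwonorm{w}{\mathcal{M}}^2$, and optimize over $\delta$ (at $\delta=\diam\mathcal{M}$ the far part is empty, which disposes of the degenerate regime). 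This gives $\htraceseminorm{w}{\mathcal{M}}^2\lesssim\Ltwonorm{w}{\mathcal{M}}\Ltwonorm{\nabla_t w}{\mathcal{M}}$ with a size-independent constant.

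For \cref{eq:interpolation I} we may assume $\nabla_t\mu\in\Ltwo{\genbd}$ (otherwise the right-hand side is infinite), so $\mu\in H^1(\genbd)$; applying the estimate above with $\mathcal{M}=\genbd$ and $w=\mu-\avg{\mu}{\genbd}$, for which $\nabla_t w=\nabla_t\mu$, yields the claim at once. Alternatively, one may rescale and invoke the classical interpolation inequality $H^{1/2}=[L^2,H^1]_{1/2}$ on the unit-size reference manifold, together with the Poincaré--Wirtinger inequality of \cref{lemma:poincare II} to control the zero-average part, and then undo the scaling with \cref{lemma:scaling of sobolev norms}; this route uses that $\genbd$ is connected.

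For \cref{eq:interpolation II} we again assume $\nabla_t\mu\in\Ltwo{\genbd}$, so $\mu\in H^1(\genbd)$. The key observation is the identity $\htracedbzseminorm{\mu}{\genbd}=\htraceseminorm{\mu^\star}{\partial\domain}$ recorded just after \cref{eq:definition htracedbz}: the estimate is really an interpolation inequality for $\mu^\star$ on the closed manifold $\partial\domain$. Since a nonzero jump across $\partial\genbd$ is incompatible with $\mu^\star\in\htrace{\partial\domain}$, the hypotheses force $\mu$ to vanish on $\partial\genbd$ in the trace sense, i.e.\ $\mu\in H_0^1(\genbd)$; consequently $\mu^\star\in H^1(\partial\domain)$ with $\nabla_t\mu^\star=(\nabla_t\mu)^\star$ and $\Ltwonorm{\mu^\star}{\partial\domain}=\Ltwonorm{\mu}{\genbd}$. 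Applying the scale-invariant seminorm estimate with $\mathcal{M}=\partial\domain$ and $w=\mu^\star$ then gives $\htracedbzseminorm{\mu}{\genbd}^2=\htraceseminorm{\mu^\star}{\partial\domain}^2\lesssim\Ltwonorm{\mu}{\genbd}\Ltwonorm{\nabla_t\mu}{\genbd}$. (On the rescaling route, this is the step where $\partial\genbd\neq\emptyset$ enters, through the Poincaré--Friedrichs inequality on $H_0^1$ of the unit-size reference.)

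The main obstacle is bookkeeping the dependence on $|\genbd|$: the full $H^{1/2}$-norm does not scale homogeneously, so one must work exclusively with seminorms and check that the powers of $|\genbd|$ produced by \cref{lemma:scaling of sobolev norms} on the three quantities cancel exactly — the hand-crafted split-at-$\delta$ argument guarantees this automatically. The secondary technical point, needed only for \cref{eq:interpolation II}, is the identification $\htracedbz{\genbd}\cap H^1(\genbd)=H_0^1(\genbd)$, which turns the zero-extension $\mu^\star$ into a genuine $H^1$ function on $\partial\domain$ and thus makes it eligible for the interpolation estimate there.
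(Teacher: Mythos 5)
Your proof is correct, but your primary argument takes a genuinely different route from the paper's. The paper rescales $\genbd$ to unit size, invokes the abstract interpolation identity $\htrace{\hat{\genbd}}=[\Ltwo{\hat{\genbd}},\hone{\hat{\genbd}}]_{1/2}$ together with a Poincar\'e inequality on the reference manifold, and then undoes the scaling via \cref{lemma:scaling of sobolev norms}, checking that the powers of $|\genbd|$ on the two sides cancel --- this is exactly the ``alternative route'' you sketch in passing. Your main route instead proves the scale-invariant estimate $\htraceseminorm{w}{\mathcal{M}}^2\lesssim\Ltwonorm{w}{\mathcal{M}}\Ltwonorm{\nabla_t w}{\mathcal{M}}$ directly, by splitting the Gagliardo double integral at a radius $\delta$ and optimizing; this is more elementary (no interpolation-space machinery on Lipschitz manifolds is needed) and makes the size-independence of the constant automatic rather than a consequence of exponent bookkeeping. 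You are also more explicit than the paper about \cref{eq:interpolation II}: the identification of $\htracedbz{\genbd}\cap \hone{\genbd}$ with $H^1_0(\genbd)$, so that $\mu^\star$ is a genuine $\hone{\partial\domain}$ function, is precisely what the paper's terse appeal to ``the Poincar\'e inequality for functions with vanishing trace'' presupposes. Two caveats, both at the same level of implicitness the paper itself accepts: the near-field bound requires joining $x$ to $y$ by a path in the manifold of length comparable to $|x-y|$, a quasi-convexity property whose constant depends on the shape (Lipschitz character and covering radius) but not the size; and this is exactly the hypothesis that fails between distinct connected components of $\partial\domain$, consistent with the fact that \cref{eq:interpolation II} is only valid (and only used) when $\partial\genbd\neq\emptyset$.
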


\begin{proof}
    Denote by $\hat{\genbd}$ the same boundary $\genbd$ rescaled to unit size and consider the push-forward $\hat{\mu} \in \htrace{\hat{\genbd}}$ as in \cref{lemma:scaling of sobolev norms}.
    First, we show that \cref{eq:interpolation I} and \cref{eq:interpolation II} for the rescaled $\hat{\genbd}$. Then, we show that the constants only depend on the shape of $\genbd$, but not its size.

    For the first step, we recall the interpolation property of fractional Sobolev spaces, which also holds on Lipschitz manifolds; see \cite{triebel_theory_1992, triebel_function_2002,schneider_beyond_2021}. That is, we have
    \begin{align*}
        \htrace{\hat{\genbd}} = [\Ltwo{\hat\genbd}, \hone{\hat{\genbd}}]_{1/2},
    \end{align*}
    up to norm equivalence, where $[\cdot, \cdot]_s$ denotes the complex interpolation functor for function spaces. The interpolation property implies in particular the existence of a constant $C_i(\hat{\genbd}) > 0$, such that
    \begin{align}
    \label{eq:interpolation poincare proof - interpolation inequality}
        \htraceseminorm{\hat{\mu} - \avg{\hat{\mu}}{\hat{\genbd}}}{\hat{\genbd}} \leq C_i(\hat{\genbd}) \sqrt{\Ltwonorm{\hat{\mu} - \avg{\hat{\mu}}{\hat{\genbd}}}{\hat{\genbd}} \honenorm{\hat{\mu} - \avg{\hat{\mu}}{\hat{\genbd}}}{\hat{\genbd}}}, & & \forall \hat{\mu} \in \htrace{\hat{\genbd}}.
    \end{align}
    The Poincaré-Friedrichs inequality implies the existence of a constant $C_p(\hat{\genbd}) > 0$ such that
    \begin{align*}
        \honenorm{\hat{\mu} - \avg{\hat{\mu}}{\hat{\genbd}}}{\hat{\genbd}} \leq C_p(\hat{\genbd}) \Ltwonorm{\nabla_t\hat{\mu}}{\hat{\genbd}}.
    \end{align*}
    By reinserting this expression into \cref{eq:interpolation poincare proof - interpolation inequality} and simplifying terms, we find a constant $C(\hat{\genbd}) > 0$ such that
    \begin{align}
    \label{eq:interpolation poincare proof - rescaled inequality I}
        \htraceseminorm{\hat{\mu} - \avg{\hat{\mu}}{\hat{\genbd}}}{\hat{\genbd}} \leq C(\hat{\genbd}) \sqrt{\Ltwonorm{\hat{\mu} - \avg{\hat{\mu}}{\hat{\genbd}}}{\hat{\genbd}} \Ltwonorm{\nabla_t \hat{\mu}}{\hat{\genbd}}}, & & \forall \hat{\mu} \in \htrace{\hat{\genbd}}.
    \end{align}
    Using the Poincaré inequality for functions with vanishing trace, we similarly find a constant $C_{00}(\hat{\genbd}) > 0$ such that
    \begin{align}
    \label{eq:interpolation poincare proof - rescaled inequality II}
        \htraceseminorm{\hat{\mu}^\star}{\partial \hat{\domain}} \leq C_{00}(\hat{\genbd}) \sqrt{\Ltwonorm{\hat{\mu}}{\hat{\genbd}} \Ltwonorm{\nabla_t\hat{\mu}}{\hat{\genbd}}}, & &\forall \hat{\mu} \in \htrace{\hat{\genbd}}.
    \end{align} 
    This finishes the first step.

    For the second step, we note that \cref{lemma:scaling of sobolev norms} implies that
    \begin{align*}
        \Ltwonorm{\mu}{\genbd} &= |\genbd|^{-\frac{1}{2}} \Ltwonorm{\hat{\mu}}{\hat{\genbd}}, & \Ltwonorm{\mu - \avg{\mu}{\genbd}}{\genbd} &= |\genbd|^{-\frac{1}{2}} \Ltwonorm{\hat{\mu} - \avg{\hat{\mu}}{\hat{\genbd}}}{\hat{\genbd}},
        \\
        \htraceseminorm{\mu}{\genbd} &= |\genbd|^{\frac{2 - n}{2(n - 1)}} \htraceseminorm{\hat{\mu}}{\hat{\genbd}}, &
        \htraceseminorm{\mu - \avg{\mu}{\genbd}}{\genbd} &= |\genbd|^{\frac{2 - n}{2(n - 1)}} \htraceseminorm{\hat{\mu} - \avg{\hat{\mu}}{\hat{\genbd}}}{\hat{\genbd}},
        \\
        \Ltwonorm{\nabla_t \mu}{\genbd} &= |\genbd|^{\frac{3 - n}{2(n-1)}} \Ltwonorm{\nabla_t \hat{\mu}}{\hat{\genbd}}.
    \end{align*}
    Observe that
    \begin{align*}
        -\frac{1}{2} + \frac{3 - n}{2(n -1)} = \frac{2 - n}{n-1}.
    \end{align*}
    Therefore, multiplying \cref{eq:interpolation poincare proof - rescaled inequality I} and \cref{eq:interpolation poincare proof - rescaled inequality II} by $|\genbd|^{\frac{2 - n}{2(n-1)}}$ yields
    \begin{align*}
        \htraceseminorm{\mu - \avg{\mu}{\genbd}}{\genbd} \leq C(\hat{\genbd}) \sqrt{\Ltwonorm{\mu - \avg{\mu}{\genbd}}{\genbd} \Ltwonorm{\nabla_t \mu}{\genbd}}, & & \forall \mu \in \htrace{\genbd},
    \end{align*}
    and
    \begin{align*}
        \htraceseminorm{\mu^\star}{\partial \domain} \leq C_{00}(\hat{\genbd}) \sqrt{\Ltwonorm{\mu}{\genbd} \Ltwonorm{\nabla_t\mu}{\genbd}}, & &\forall \mu \in \htrace{\genbd},
    \end{align*}
    which finishes the proof.

\end{proof}

\subsection{Norm equivalence}
\label{subsec:norm equivalence}
We state in this section the general result on the norm equivalence between the natural and intrinsic norms in trace spaces with explicit dependence on the properties of the underlying Lipschitz boundary. A detailed proof can be found in \cite{leoni_first_2017}.

\begin{theorem}[Theorem 18.40 in \cite{leoni_first_2017}]
\label{thm:norm equivalence}
Let $\domain \subset \R^n, n \geq 2$ be an open set with uniformly Lipschitz boundary $\partial \domain$. Let $\epsilon, L > 0$ and $M \in \N$ be the constants in \cref{def:uniformly lipschitz cts boundary}. Then, for every $\mu \in \htrace{\partial \domain}$, there exist a constant $c = c(n)$ and $u \in \hone{\domain}$, such that
\begin{align*}
    \Ltwonorm{u}{\domain} \leq (M \epsilon)^{1/2} \Ltwonorm{\mu}{\partial \domain},
\end{align*}
and
\begin{align}
\label{eq:general norm equivalence}
    \honeseminorm{u}{\domain} \leq c M (1 + L)^{3 + n / 2} \epsilon^{-1/2} \Ltwonorm{\mu}{\partial \domain} + c M (1 + L)^{2 + (n+1)/2} \htraceseminorm{\mu}{\partial \domain}.
\end{align}
\end{theorem}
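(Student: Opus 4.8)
The plan is to construct $u$ explicitly as a localized lifting of $\mu$ and to carry the covering constants $\epsilon, L, M$ of \cref{def:uniformly lipschitz cts boundary} through every reduction; no individual step is deep, so the proof is essentially a careful accounting exercise. First I would fix a subordinate partition of unity $\{\psi_k\}$ for the cover $\cover$ — which, as in the proof of \cref{prop:norm equivalence for htracedbz}, we may take to consist of balls $C_k = \ball{\epsilon}{x_k}$ — with $0 \le \psi_k \le 1$, $\sum_k \psi_k \equiv 1$ near $\partial\domain$, $\operatorname{supp}\psi_k \subset C_k$ and $\|\nabla\psi_k\|_{L^\infty} \lesssim \epsilon^{-1}$. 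Writing $\mu = \sum_k \mu_k$ with $\mu_k \coloneqq \psi_k\mu$, it suffices to produce $u_k \in \hone\domain$ with $\traceop(u_k) = \mu_k$, $\operatorname{supp}u_k \subset C_k$, $\Ltwonorm{u_k}{\domain}^2 \le \epsilon\,\Ltwonorm{\mu_k}{\partial\domain}^2$ and a local version of \cref{eq:general norm equivalence}. Then $u \coloneqq \sum_k u_k$ is a lifting of $\mu$; the multiplicity bound turns $\sum_k$ into a factor $M$ via $\|\sum_k v_k\|^2 \le M\sum_k\|v_k\|^2$, the inequality $\sum_k\psi_k^2 \le 1$ gives $\sum_k\Ltwonorm{\mu_k}{\partial\domain}^2 \le \Ltwonorm{\mu}{\partial\domain}^2$, and the standard commutator estimate for multiplication by $\psi_k$ gives $\sum_k\htraceseminorm{\mu_k}{\partial\domain}^2 \lesssim \htraceseminorm{\mu}{\partial\domain}^2 + M\epsilon^{-1}\Ltwonorm{\mu}{\partial\domain}^2$.

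For the local lifting I would flatten the chart with its graph function $f_k$: the map $\Phi_k(y',t) \coloneqq (y', f_k(y') + t)$ sends $\{t > 0\}$ onto $C_k \cap \domain$ with unit Jacobian and $\|D\Phi_k\|_{L^\infty}, \|D\Phi_k^{-1}\|_{L^\infty} \lesssim 1 + L$. Pulling $\mu_k$ back along $y' \mapsto (y', f_k(y'))$ yields $\nu_k$ on $\R^{n-1}$, supported in a ball of radius $\lesssim \epsilon$, with $\Ltwonorm{\nu_k}{\R^{n-1}} \le \Ltwonorm{\mu_k}{\partial\domain}$ and $\htraceseminorm{\nu_k}{\R^{n-1}} \lesssim (1+L)^{n/2}\htraceseminorm{\mu_k}{\partial\domain}$, the power of $(1+L)$ coming from bounding the chordal distance on the graph below by a multiple of $|y'-z'|$ and the density $(1+|\nabla f_k|^2)^{1/2}$ above by $(1+L^2)^{1/2}$. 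It then remains to extend a compactly supported $\nu \in \htrace{\R^{n-1}}$ into the half-space, for which I would use the classical convolution extension: fix $\varphi \in \mathcal{C}_0^\infty(\ball{1}{0})$ with $\int\varphi = 1$ and a cutoff $\chi \in \mathcal{C}_0^\infty([0,1))$ equal to $1$ near $0$, and set $w(y',t) \coloneqq \chi(t/\epsilon)\int_{\R^{n-1}}\nu(y' - tz)\varphi(z)\,dz$. The usual computations give $\traceop(w) = \nu$, $\Ltwonorm{w}{\R^n_+}^2 \lesssim \epsilon\,\Ltwonorm{\nu}{\R^{n-1}}^2$ and $\honeseminorm{w}{\R^n_+}^2 \lesssim \htraceseminorm{\nu}{\R^{n-1}}^2 + \epsilon^{-1}\Ltwonorm{\nu}{\R^{n-1}}^2$, the last term being the contribution of $\chi'(t/\epsilon)/\epsilon$.

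Composing $w$ with $\Phi_k^{-1}$ and extending by zero produces $u_k$; the composition distorts the $H^1$-seminorm by at most a bounded power of $(1+L)$ (Jacobian of $\Phi_k$ together with the chain rule), and it is harmless to multiply by $\psi_k$, which only adds a further $\epsilon^{-1}\Ltwonorm{\cdot}{}^2$-type term of the form already present. Feeding the $L^2$-bounds through the localization of the first paragraph gives $\Ltwonorm{u}{\domain} \le (M\epsilon)^{1/2}\Ltwonorm{\mu}{\partial\domain}$, and feeding the gradient bounds through gives $\honeseminorm{u}{\domain} \lesssim M(1+L)^{a}\epsilon^{-1/2}\Ltwonorm{\mu}{\partial\domain} + M(1+L)^{b}\htraceseminorm{\mu}{\partial\domain}$ for some exponents $a, b$ which, estimating crudely at each step, may be taken to be $3 + n/2$ and $2 + (n+1)/2$ as in \cref{eq:general norm equivalence}.

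The main obstacle is not a single inequality but the bookkeeping of the powers of $(1+L)$ and $\epsilon$: they enter through the Jacobian and inverse Jacobian of the flattening maps $\Phi_k$, through the distortion of the $\htrace{}$-seminorm under the boundary parametrisations (comparing Euclidean distance on a Lipschitz graph with the flat parameter distance), through the $\epsilon^{-1}$ produced by differentiating both the height cutoff $\chi(t/\epsilon)$ and the partition of unity $\psi_k$, and through the surface-measure density $(1+|\nabla f_k|^2)^{1/2}$. The strategy is to bound everything crudely rather than optimise and simply accept the exponents that emerge, since no sharp constant is needed in the applications. Alternatively, this is exactly Theorem~18.40 of \cite{leoni_first_2017}, whose proof follows this outline.
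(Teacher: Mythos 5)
Your outline is correct and is essentially the standard proof of this result (partition of unity subordinate to the Lipschitz cover, flattening each chart, convolution lifting into the half-space, and bookkeeping of $\epsilon$, $L$, $M$); the paper itself gives no proof but simply cites Theorem~18.40 of \cite{leoni_first_2017}, whose argument follows exactly the route you describe.
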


\subsection{Extensions}
\label{subsec:extensions}
We restate here the extension result from \cite{weder_extension_2025}, where we use the fact that the extension operator is constructed locally and, therefore, we may assume the extension to be supported in an intermediate compact subset:

\begin{theorem}
\label{thm:boundary extension operator}
    Let $\genbd \subset \R^n$ be a compact $(n-1)$-dimensional Lipschitz manifold and $\defbd \subset \genbd$ an $(n-1)$-dimensional submanifold with boundary. Furthermore, we assume that there exists a subset $\genbd_0 \subset \genbd$, such that
    \begin{align*}
        \defbd \subset \genbd_0 \subset \genbd, & \text{ and } \partial \defbd \cap \partial \genbd = \emptyset.
    \end{align*}
    Finally, we assume that the boundary $\partial \defbd$ of $\defbd$ is an $(n-2)$-dimensional Lipschitz manifold.

    Then, there exists a continuous extension operator $\extop{\defbd}{\genbd}: \htrace{\defbd} \to \htracedbz{\genbd}$, such that for $\mu \in \htrace{\defbd}$,
    \begin{align*}
        \Ltwonorm{\extop{\defbd}{\genbd}(\mu)}{\genbd}^2 \lesssim \Ltwonorm{\mu}{\defbd}^2,
    \end{align*}
    and
    \begin{align*}
        \htraceseminorm{\extop{\defbd}{\genbd}(\mu)}{\genbd}^2 \lesssim |\defbd|^{\frac{-1}{n-1}} \Ltwonorm{\mu}{\defbd}^2 + \htraceseminorm{\mu}{\defbd}^2
    \end{align*}
\end{theorem}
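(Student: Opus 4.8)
The statement is established in \cite{weder_extension_2025}; I sketch the plan one would follow. The idea is to reduce everything to a \emph{fixed unit-size model} by a dilation, build the operator there from a classical Sobolev extension and a single fixed cut-off, and then transport the two bounds back using the scaling identities \cref{eq:ltwo norm scaling}, \cref{eq:hs seminorm scaling} (equivalently \cref{lemma:scaling of sobolev norms}). Concretely, I would first apply the dilation $x\mapsto|\defbd|^{\frac{-1}{n-1}}x$ of $\R^n$, writing $\hat{\genbd},\hat{\defbd},\hat{\genbd}_0$ for the images of $\genbd,\defbd,\genbd_0$, so that $|\hat{\defbd}|\simeq1$, and push $\mu$ forward to $\hat{\mu}$ as in \cref{lemma:scaling of sobolev norms}. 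As already used in \cref{prop:norm equivalence for htracedbz}, the covering radius, Lipschitz bound and overlap number in \cref{def:uniformly lipschitz cts boundary} are dilation invariant, and so is the relative width of the collar $\hat{\genbd}_0\setminus\hat{\defbd}$; hence it suffices to produce the operator at unit scale with constants depending only on the Lipschitz character of $\genbd$ and on the shapes of $\defbd$ and $\genbd_0$.

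At unit scale I would cover $\hat{\genbd}$ by finitely many bi-Lipschitz charts with a subordinate smooth partition of unity. Since $\partial\defbd$ is an $(n-2)$-dimensional Lipschitz manifold and $\partial\defbd\cap\partial\genbd=\emptyset$, in each chart $\hat{\defbd}$ is a relatively open piece of a bounded Lipschitz domain $D\subset\R^{n-1}$ whose boundary stays a fixed distance away from the image of $\partial\hat{\genbd}$, the gap lying inside the image of $\hat{\genbd}_0$. On each chart I would extend the chart representation of $\hat{\mu}$ to $\tilde v\in\htrace{\R^{n-1}}$ by a classical extension operator for Lipschitz domains — a reflection across $\partial D$ written in graph coordinates is cleanest, since its norm is controlled purely by the Lipschitz constant of $D$ (see \cite{grisvard_elliptic_2011, leoni_first_2017}) — so that $\Ltwonorm{\tilde v}{\R^{n-1}}+\htraceseminorm{\tilde v}{\R^{n-1}}\lesssim\Ltwonorm{\hat{\mu}}{D}+\htraceseminorm{\hat{\mu}}{D}$. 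I would then multiply $\tilde v$ by a fixed smooth $\chi$ equal to $1$ on $D$, supported in the chart image of $\hat{\genbd}_0$, with bounded gradient, glue the pieces $\chi\tilde v$ via the partition of unity, pull the result $w$ back to $\hat{\genbd}$, and define $\extop{\defbd}{\genbd}\mu$ as the pull-back of $w$ under the inverse dilation.

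Three properties then have to be checked at unit scale. First, $w$ is supported in $\hat{\genbd}_0$, hence a positive distance from $\partial\hat{\genbd}$, so its zero-extension to the ambient manifold remains in $H^{1/2}$, i.e. $w\in\htracedbz{\hat{\genbd}}$: the extra term $\int_{\hat{\genbd}}\int_{\partial\hat{\domain}\setminus\hat{\genbd}}|w(y)|^2|x-y|^{-n}$ appearing in $\htracedbzseminorm{\cdot}{\hat{\genbd}}$ converges because $|x-y|\gtrsim1$ on the support of $w$. Second, $\Ltwonorm{w}{\hat{\genbd}}\lesssim\Ltwonorm{\hat{\mu}}{\hat{\defbd}}$, since $\chi$, the chart Jacobians and the classical extension are all $L^2$-bounded. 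Third, and this is the crux, the Gagliardo seminorm of the product obeys the multiplier estimate
\[
  \htraceseminorm{\chi\tilde v}{\R^{n-1}}^2\lesssim\|\chi\|_{L^\infty}^2\,\htraceseminorm{\tilde v}{\R^{n-1}}^2+\|\nabla\chi\|_{L^\infty}\|\chi\|_{L^\infty}\,\Ltwonorm{\tilde v}{\R^{n-1}}^2,
\]
obtained by splitting the double integral at $|x-y|=1$ and using $|\chi(x)-\chi(y)|\le\|\nabla\chi\|_{L^\infty}|x-y|$ on the near-diagonal part; with $\|\nabla\chi\|_{L^\infty}\lesssim1$ this yields $\htraceseminorm{w}{\hat{\genbd}}^2\lesssim\Ltwonorm{\hat{\mu}}{\hat{\defbd}}^2+\htraceseminorm{\hat{\mu}}{\hat{\defbd}}^2$, the finite overlap of the charts keeping the number of such contributions bounded.

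Finally I would scale back. By \cref{eq:ltwo norm scaling} and \cref{eq:hs seminorm scaling}, passing from the unit model $\hat{\genbd}$ back to $\genbd$ multiplies the squared $L^2$ norm by a factor $|\defbd|$ and the squared $H^{1/2}$-seminorm by $|\defbd|^{\frac{n-2}{n-1}}$, while $\Ltwonorm{\hat{\mu}}{\hat{\defbd}}^2=|\defbd|^{-1}\Ltwonorm{\mu}{\defbd}^2$ and $\htraceseminorm{\hat{\mu}}{\hat{\defbd}}^2=|\defbd|^{-\frac{n-2}{n-1}}\htraceseminorm{\mu}{\defbd}^2$. Substituting the unit-scale bounds, the $L^2$ estimate is scale invariant and gives $\Ltwonorm{\extop{\defbd}{\genbd}\mu}{\genbd}^2\lesssim\Ltwonorm{\mu}{\defbd}^2$, whereas the seminorm becomes $|\defbd|^{\frac{n-2}{n-1}}\big(|\defbd|^{-1}\Ltwonorm{\mu}{\defbd}^2+|\defbd|^{-\frac{n-2}{n-1}}\htraceseminorm{\mu}{\defbd}^2\big)=|\defbd|^{\frac{-1}{n-1}}\Ltwonorm{\mu}{\defbd}^2+\htraceseminorm{\mu}{\defbd}^2$, exactly the claim, and the factor $|\defbd|^{\frac{-1}{n-1}}$ is precisely the scaled-back size of $\|\nabla\chi\|_{L^\infty}$. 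I expect the main obstacle to be Step three: one needs the classical extension with norm controlled purely by the Lipschitz constant together with the fractional multiplier inequality in the sharp form above, and one must glue the charts without spoiling the explicit $|\defbd|$-dependence — the finite-overlap property of the Lipschitz cover is what keeps the estimate uniform in the feature size.
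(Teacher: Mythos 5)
Your proposal is correct in substance, but it proves more than the paper does: the paper's own ``proof'' of \cref{thm:boundary extension operator} is essentially a citation, deferring the entire construction of $\extop{\defbd}{\genbd}$ to \cite{weder_extension_2025} (Theorem~2 with $k=n-1$, $p=2$, $s=1/2$) and only adding two remarks, namely that the covering radius can be shrunk so that the extension is supported in $\genbd_0$, and that the resulting zero-extension lies in $\htracedbz{\genbd}$ because the cross term $\int_{\partial\domain\setminus\genbd}\int_{\genbd_0}|x-y|^{-n}|\extop{\defbd}{\genbd}(\mu)(x)|^2$ is controlled by $|\defbd|^{\frac{-1}{n-1}}\Ltwonorm{\extop{\defbd}{\genbd}(\mu)}{\genbd}^2$ via the distance bound $\dist(\partial\domain\setminus\genbd,\genbd_0)\simeq|\defbd|^{\frac{1}{n-1}}$. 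What you supply instead is a plausible reconstruction of the cited construction itself: dilation to unit scale, chart-wise classical extension, multiplication by a cut-off supported in the collar, the fractional multiplier estimate obtained by splitting the Gagliardo double integral at $|x-y|=\delta$ and optimizing, gluing with finite overlap, and scaling back. Your scaling bookkeeping is exactly right (the squared seminorm picks up $|\defbd|^{\frac{n-2}{n-1}}$, the squared $L^2$ norms $|\defbd|$, and $\frac{n-2}{n-1}-1=\frac{-1}{n-1}$ reproduces the stated factor), and your identification of $|\defbd|^{\frac{-1}{n-1}}$ with the scaled-back $\|\nabla\chi\|_{L^\infty}$ correctly explains where the lower-order term comes from. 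The only points to tighten are that the uniform-in-size constants at unit scale require the cover of $\hat{\defbd}$ and its collar to consist of a bounded number of charts with uniformly bounded Lipschitz constants (this is where the isotropy of the feature and \cref{def:uniformly lipschitz cts boundary} enter), and that your qualitative ``the cross term converges since $|x-y|\gtrsim 1$'' should, after rescaling, be recorded quantitatively as the $|\defbd|^{\frac{-1}{n-1}}\Ltwonorm{\mu}{\defbd}^2$ contribution, since the $\htracedbz{\genbd}$-norm of the extension is what is actually used in \cref{thm:dirichlet neumann reliability}.
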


\begin{proof}
    For the construction of the extension operator, we refer to \cite{weder_extension_2025}. In particular, in the proof, one can reduce the covering radius $\epsilon > 0$ if needed, such that the support of $\extop{\genbd}{\partial\domain}(\mu) \subset \genbd_0$ and we can restrict the extension to $\genbd$.
    Hence, choosing $k = n-1, p = 2$, and $s = 1/2$ in \cite{weder_extension_2025}[Theorem 2], we find
    \begin{align*}
        \Ltwonorm{\extop{\defbd}{\genbd}(\mu)}{\genbd}^2 \lesssim \Ltwonorm{\mu}{\defbd}^2,
    \end{align*}
    and
    \begin{align*}
        \htraceseminorm{\extop{\defbd}{\genbd}(\mu)}{\genbd}^2 \lesssim |\defbd|^{\frac{-1}{n-1}} \Ltwonorm{\mu}{\defbd}^2 + \htraceseminorm{\mu}{\defbd}^2,
    \end{align*}
    where we have dropped higher-order terms in $|\defbd|$ for simplicity.
    
    Finally, using that $\support{\extop{\defbd}{\genbd}(\mu)} \subset \genbd_0 \subset \genbd$, we find for the $\htracedbz{\defbd}$-norm that
    \begin{align*}
        \int_{\partial \domain \setminus \genbd} \int_\genbd \frac{|\extop{\defbd}{\genbd}(\mu)(x)|^2}{|x - y|^n} \dd{s(x)} \dd{s(y)}  = \int_{\partial \domain \setminus \genbd} \int_{\genbd_0} \frac{|\extop{\defbd}{\genbd}(\mu)(x)|^2}{|x - y|^n} \dd{s(x)} \dd{s(y)}
        \\
        \lesssim \frac{|\partial \domain \setminus \genbd|}{\dist(\partial\domain\setminus\genbd, \genbd_0)^n} \Ltwonorm{\extop{\defbd}{\genbd}(\mu)}{\genbd}^2 \lesssim |\defbd|^{\frac{-1}{n-1}} \Ltwonorm{\extop{\defbd}{\genbd}(\mu)}{\genbd}^2,
    \end{align*}
    using that $|\partial \domain \setminus \genbd| \simeq |\defbd|$ and $\dist(\partial\domain\setminus\genbd, \genbd_0) \simeq |\defbd|^{\frac{1}{n-1}}$ under rescaling. This concludes the proof.
\end{proof}

\bibliographystyle{siamplain}
\bibliography{references}

\end{document}